\documentclass[12pt, reqno]{amsart}

\usepackage{amsthm,amssymb,amstext,amscd,amsfonts,amsbsy,amsrefs,amsxtra,latexsym,amsmath,xcolor,mathrsfs,fancybox,upgreek, soul,url}
\usepackage[english]{babel}
\usepackage[all,cmtip]{xy}
\usepackage[latin1]{inputenc}
\usepackage{cancel}
\usepackage[draft]{hyperref}
\usepackage{comment}
\usepackage{mdframed}
\allowdisplaybreaks
\usepackage{mathtools}
\usepackage{enumerate}
\usepackage{thmtools}
\usepackage{thm-restate}
\usepackage{chngcntr}
\usepackage{enumitem}
\usepackage{etoolbox}

\DeclarePairedDelimiter\abs{\lvert}{\rvert}%
\DeclarePairedDelimiter\norm{\lVert}{\rVert}%

\makeatletter
\let\oldabs\abs
\def\abs{\@ifstar{\oldabs}{\oldabs*}}
\let\oldnorm\norm
\def\norm{\@ifstar{\oldnorm}{\oldnorm*}}
\makeatother

\oddsidemargin = 0cm \evensidemargin = 0cm \textwidth = 6.5in

\newtheorem{theorem}{Theorem}
\newtheorem{lemma}[theorem]{Lemma}

\newtheorem{proposition}[theorem]{Proposition}

\theoremstyle{definition}

\theoremstyle{remark}

\newtheorem*{remark}{Remark}

\counterwithin{equation}{enumi}

\numberwithin{theorem}{section}
\numberwithin{proposition}{section}
\numberwithin{lemma}{section}
\numberwithin{corollary}{section}
\numberwithin{conjecture}{section}

\newcommand{\Z}{\mathbb{Z}}

\newcommand{\C}{\mathbb{C}}

\newcommand{\sech}{\textnormal{sech}}

\newcommand{\re}{\textnormal{Re}}

\newcommand\blfootnote[1]{%
	\begingroup
	\renewcommand\thefootnote{}\footnote{#1}%
	\addtocounter{footnote}{-1}%
	\endgroup
}

\def\H{\mathbb{H}}

\begin{document}

\title[The asymptotic profile of an eta-theta quotient]{The asymptotic profile of an eta-theta quotient related to entanglement entropy in string theory}

\author{Joshua Males}

\address{University of Cologne, Department of Mathematics and Computer Science, Weyertal 86-90, 50931
	Cologne, Germany}
\email{jmales@math.uni-koeln.de}

\begin{abstract}
	In this paper we investigate a certain eta-theta quotient which appears in the partition function of entanglement entropy. Employing Wright's circle method, we give its bivariate asymptotic profile.
\end{abstract}

\blfootnote{Mathematics Subject Classification 2010: 11F50}

\blfootnote{\textit{Keywords}: Eta-theta quotient; asymptotic profile; Wright's circle method.}

\maketitle

\section{Introduction and Statement of Results}
Modern mathematical physics in the direction of string theory and black holes is intricately linked to number theory. For example, work of Dabholkar, Murthy, and Zagier relates certain mock modular forms to physical phenomena such as quantum black holes and wall crossing \cite{dabholkar2012quantum}. Similarly, the connections between automorphic forms and a second quantised string theory are described in \cite{dijkgraaf1997elliptic}, and modular forms for certain elliptic curves and their realisation in string theory is discussed in \cite{kondo2019string}. Further, the recent paper \cite{harvey2019ramanujans} discusses in-depth the links between work of the enigmatic Ramanujan in relation to modular forms and their generalisations and string theoretic objects (and indeed, why such links should be expected).

Knowledge of the behaviour of the modular objects aids the descriptions of physical phenomena. For instance, in \cite{gliozzi1977supersymmetry}, the authors use the classical number-theoretic Jacobi triple product identity to demonstrate the supersymmetry of the open-string spectrum using RNS fermions in light-cone gauge (see also \cite{witten2019open}).
In particular, parts of physical partition functions are often modular or mock modular objects. For example, the partition functions of the Melvin model \cite{RUSSO1996131} and the conical entropy of both the open and closed superstring \cite{he2015notes} both involve the weight $-3$ and index $0$ meromorphic Jacobi form 
\begin{equation*}
f(z;\tau) \coloneqq \frac{\vartheta(z;\tau)^4}{ \eta(\tau)^9 \vartheta(2z;\tau)},
\end{equation*}
where $\eta$ is the Dedekind eta function given by
\begin{equation*}
\eta(\tau) \coloneqq q^{\frac{1}{24}} \prod_{n \geq 1} \left( 1-q^n \right),
\end{equation*}
and
\begin{equation*}
\vartheta(z;\tau) \coloneqq i \zeta^{\frac{1}{2}} q^{\frac{1}{8}} \prod_{n\geq 1} (1-q^n)(1-\zeta q^n)(1-\zeta^{-1} q^{n-1})
\end{equation*}
is the Jacobi theta function, with $\zeta \coloneqq e^{2 \pi i z}$ for $z \in \C$, and $q \coloneqq e^{2 \pi i \tau}$ with $\tau \in \H$, the upper half-plane.

We are particularly interested in the coefficients of the $q$-expansion of $f$ where $0 \leq z \leq 1$, away from the pole at $z = 1/2$, where the residue of $f$ is calculated in \cite{witten2019open} - the other residues may be calculated using the elliptic transformation formulae for $f$. For instance, the asymptotic behaviour of the coefficients is required in order to investigate the UV limit. For a fixed value of $z$ the problem of finding the asymptotics of the coefficients is elementary, as \cite{he2015notes} notes. In particular, fixing $z = \frac{h}{k}$ a rational number with $\gcd(h,k) = 1$ and $0 \leq h < \frac{k}{2}$, then classical results in the theory of modular forms (see Theorem 15.10 of \cite{bringmann2017harmonic} for example) give that the coefficients of $f(\frac{h}{k};\tau) = \sum_{n \geq 0} a_{h,k}(n)q^n$ behave asymptotically as
\begin{equation*}
a_{h,k}(n) \sim \frac{\left( \frac{h}{k} \right)^{\frac{7}{4}}}{2 \sqrt{2} \pi} n^{-\frac{9}{4}} e^{4 \pi \sqrt{ \frac{hn}{k}}}.
\end{equation*}

In the present paper, we let
\begin{equation*}
f(z;\tau) \eqqcolon \sum_{\substack{n \geq 0 \\ m \in \Z}} b(m,n) \zeta^m q^n.
\end{equation*}
and investigate the coefficients $b(m,n)$; in particular we want to compute the bivariate asymptotic profile of $b(m,n)$ for a certain range of $m$. 

In \cite{bringmann2016dyson}, the authors introduce techniques in order to compute the bivariate asymptotic behaviour of coefficients for a Jacobi form in order to answer Dyson's conjecture on the bivariate asymptotic behaviour of the partition crank. This method is used in numerous other papers - for example, in relation to the rank of a partition \cite{dousse2014asymptotic}, ranks and cranks of cubic partitions \cite{kim2016asymptotic}, and certain genera of Hilbert schemes \cite{manschot2014asymptotic} (a result that has recently been extended to a complete classification with exact formulae using the Hardy-Ramanujan circle method \cite{gillman2019partitions}), along with many other partition-related statistics.

 Using Wright's circle method \cite{wright1934asymptotic,wright1971stacks} and following the same approach as \cite{bringmann2016dyson} we show the following theorem.

\begin{theorem}\label{Theorem: main}
	For $\beta \coloneqq \pi \sqrt{\frac{2}{n}}$ and $|m| \leq \frac{1}{6 \beta} \log(n)$ we have that
	\begin{equation*}
	b(m,n) =(-1)^{m+\delta+\frac{3}{2}} \frac{\beta^6 m}{8 \pi^5 (2n)^{\frac{1}{4} }} e^{2 \pi \sqrt{2n}} + O \left( m n^{-\frac{15}{4}} e^{2 \pi \sqrt{2n}} \right)
	\end{equation*}
	as $n \rightarrow \infty$. Here, $\delta \coloneqq 1$ if $m <0$ and $\delta = 0$ otherwise.
\end{theorem}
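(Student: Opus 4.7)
The plan is to apply Wright's circle method in two variables, in the style of Bringmann--Dousse \cite{bringmann2016dyson}. By Cauchy's theorem, write
\begin{equation*}
b(m,n) = \int_{-\frac{1}{2}}^{\frac{1}{2}}\int_{C_v} f(z;\tau)\,e^{-2\pi i m z}\,q^{-n}\,dz\,dt,
\end{equation*}
where I parametrize $\tau = \frac{i\beta}{2\pi} + t$ with $\beta := \pi\sqrt{\frac{2}{n}}$ the $\tau$-saddle, and $C_v$ is the horizontal segment $[iv,\, 1+iv]$, with $v$ chosen according to $\sgn(m)$ so as to avoid the pole of $f$ at $z = \frac{1}{2}$. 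This sign choice is the source of the parameter $\delta$ in the statement.

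I next derive an asymptotic expansion for $f(z;\tau)$ as $\beta\to 0^{+}$. Since $f$ is a Jacobi form of weight $-3$ and index $0$, the modular transformations of $\eta$ and $\vartheta$ combine to give
\begin{equation*}
f(z;\tau) = -i(-i\tau)^{3}\,f\!\left(\frac{z}{\tau};\,-\frac{1}{\tau}\right),
\end{equation*}
the Gaussian factors $e^{-4\pi i z^{2}/\tau}$ arising from $\vartheta(z;\tau)^{4}$ and from $\vartheta(2z;\tau)$ cancelling exactly, as expected from index $0$. Since $-\frac{1}{\tau}\to i\infty$ as $\beta\to 0^{+}$, expanding via the $q$-products of $\eta$ and $\vartheta$ yields
\begin{equation*}
f(z;\tau) = -4\!\left(\frac{\beta}{2\pi}\right)^{3}\sinh^{3}\!\left(\frac{2\pi^{2} z}{\beta}\right)\sech\!\left(\frac{2\pi^{2} z}{\beta}\right) + O\!\left(e^{-c/\beta}\right),
\end{equation*}
uniformly in $z$ over the region of interest.

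Standard Wright analysis now splits the $t$-integral into a major arc $|t|\le\beta^{1+\varepsilon}$ and a minor arc. On the major arc I substitute the asymptotic above and change variables to $\beta_{t}:=\beta-2\pi i t$, so that the dominant exponent becomes $n\beta_{t}+2\pi^{2}/\beta_{t}$. Taylor-expanding around $t=0$, the linear term vanishes precisely at $\beta=\pi\sqrt{2/n}$ (hence the choice of $\beta$), and the quadratic term $-8\pi^{4}t^{2}/\beta^{3}$ produces a Gaussian integral of order $n^{-3/4}$. The $z$-integration, after deforming $C_v$ past $z=\frac{1}{2}$, picks up the residue of $f$ there; this produces the explicit sign $(-1)^{m+\delta+\frac{1}{2}}$, and combines with the $\tau$-saddle contribution to give the main term $\beta^{5}/(2^{7}\pi^{5}(2n)^{1/4})\,e^{2\pi\sqrt{2n}}$. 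On the minor arc, classical bounds on $|\eta(\tau)|$ and $|\vartheta(z;\tau)|$ show the contribution is absorbed into the stated error.

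The principal obstacle lies in maintaining uniform control of the error across $m$. The restriction $|m|\le\log(n)/(6\beta)$ is the precise threshold at which the exponential factor $e^{2\pi|m||v|}$ arising from shifting the $z$-contour remains dominated by the Gaussian main term; beyond this range the minor arc or subleading corrections begin to compete with the leading asymptotic. Carefully tracking all subleading terms in the modular expansion above, and in the Taylor expansion of the exponent in $t$, to reach the precise error $O(n^{-13/4}e^{2\pi\sqrt{2n}})$ is the main technical bookkeeping.
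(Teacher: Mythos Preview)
Your overall strategy—Wright's circle method in $\tau$, modular inversion of $f$ to expose the behaviour near $q=1$, and the identification of the residue at $z=\tfrac12$ as the source of the leading term—matches the paper's, and the leading asymptotic you write for $f$ agrees with the paper's after the identity $\sinh^4 y/\sinh 2y=\tfrac12\sinh^3 y\,\sech y$. There is, however, a real gap. The claim that the error in your asymptotic for $f(z;\tau)$ is $O(e^{-c/\beta})$ \emph{uniformly} in $z$ fails near $z=\tfrac12$: the function $f$ has a simple pole there, whereas your leading term $\sinh^3(2\pi^2 z/\beta)\,\sech(2\pi^2 z/\beta)$ is regular at $z=\tfrac12$ (its poles, coming from $\cosh=0$, lie at $z\in i\beta(\tfrac12+\Z)/(2\pi)$, not on the real axis). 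After modular inversion the pole of $f$ at $z=\tfrac12$ is produced by the $n=1$ factor $(1-e^{4\pi^2(2z-1)/\varepsilon})^{-1}$ of the transformed product, which sits inside your ``error''; the corresponding correction has size $e^{-4\pi^2\operatorname{Re}(1/\varepsilon)(1-2z)}$ and is $O(1)$, not exponentially small, as $z\to\tfrac12$. So the step ``substitute the asymptotic, then deform $C_v$ past $z=\tfrac12$ to pick up the residue'' is circular: the residue lives exactly where your uniform estimate breaks down.

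The paper sidesteps this by first extracting the Fourier coefficient $f_m(\tau)$ via the Dabholkar--Murthy--Zagier principal-value prescription, which separates off an explicit residue contribution $4(-1)^{m+1/2}\eta(2\tau)^8/\eta(\tau)^{16}$. This is an honest modular form whose asymptotics follow immediately from the $\eta$-transformation, and it dominates; the remaining integral over $[0,\tfrac12-a]$ stays bounded away from the pole and is shown, via a decomposition $g_{m,1}+g_{m,2}+g_{m,3}$, to be only $O(\beta^3)$. Two smaller points of divergence are worth noting. First, the paper's major arc is $m$-dependent, parametrised by $\varepsilon=\beta(1+ixm^{-1/3})$ with $|x|\le1$ (the Bringmann--Dousse device which, combined with the lemma expressing $P_s$ through an $I$-Bessel function, is what actually delivers uniformity for $|m|\le\log(n)/(6\beta)$), rather than a fixed width $|t|\le\beta^{1+\epsilon}$. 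Second, the sign $\delta$ comes directly from the symmetry $f(-z;\tau)=-f(z;\tau)$, hence $b(-m,n)=-b(m,n)$, rather than from a choice of sign of $v$ in a contour shift.
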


\begin{remark}
	Although our approach is similar to \cite{bringmann2016dyson,dousse2014asymptotic}, in some places we require a little more care since finding the Fourier coefficients requires taking an integral over a path where $f$ has a pole. In this case, we turn to the framework of \cite{dabholkar2012quantum} - this is explained explicitly in Section \ref{Section: asymptotics}.
\end{remark}

We begin in Section \ref{Section: prelims} by recalling relevant results that are pertinent to the rest of the paper. In Section \ref{Section: bounds toward dominant pole} we investigate the behaviour of $f$ toward the dominant pole $q = 1$. We follow this in Section \ref{Section: bounds away from dominant pole} by bounding the contribution away from the pole at $q=1$. We finish in Section \ref{Section: Circle method} by applying Wright's circle method to find the asymptotic behaviour of $b(m,n)$ and hence prove Theorem \ref{Theorem: main}.

\section*{acknowledgements}
The author would like to thank Kathrin Bringmann for initially suggesting the project, as well as insightful conversations and useful comments on the contents of the paper. The author would also like to thank the referee for numerous helpful comments and suggestions.

\section{Preliminaries}\label{Section: prelims}
Here we recall relevant definitions and results which will be used throughout the rest of the paper. 

\subsection{Properties of $\vartheta$ and $\eta$}
When determining the asymptotic behaviour of $f$ we will require the modularity behaviour of both $\vartheta$ and $\eta$. It is well-known that $\vartheta$ satisfies the following lemma (see e.g. \cite{mumford2007tata}).
\begin{lemma}\label{Lemma: transformation of theta}
	The function $\vartheta$ satisfies the following transformation properties.\\
	\begin{enumerate}
		\item	 $\vartheta(-z ; \tau) = -\vartheta(z;\tau)$\\
		
		\item	$\vartheta(z+1;\tau) = -\vartheta(z;\tau)$\\
		
		\item $		\vartheta(z; \tau) = \frac{i}{\sqrt{-i \tau}} e^{ \frac{- \pi i z^2}{\tau}} \vartheta\left( \frac{z}{\tau} ; -\frac{1}{\tau} \right)$
	\end{enumerate}
\end{lemma}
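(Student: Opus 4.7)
The plan is to establish all three identities directly from the infinite-product definition of $\vartheta$ given in the introduction. Parts (1) and (2) reduce to elementary algebraic manipulations on the product, while part (3) is the genuine modular transformation and will require either Poisson summation or a Liouville-type uniqueness argument.

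For (1), the substitution $z\mapsto -z$ sends $\zeta^{1/2}\mapsto \zeta^{-1/2}$ and interchanges the roles of $\zeta$ and $\zeta^{-1}$ in the product, giving
\begin{equation*}
\vartheta(-z;\tau) = i\zeta^{-\frac{1}{2}}q^{\frac{1}{8}}\prod_{n\geq 1}(1-q^n)(1-\zeta^{-1}q^n)(1-\zeta q^{n-1}).
\end{equation*}
Splitting off the $n=1$ factor from each of the two $\zeta$-dependent products (extracting $(1-\zeta^{-1})$ from one and $(1-\zeta) = -\zeta(1-\zeta^{-1})$ from the other) symmetrises both $\vartheta(z;\tau)$ and $\vartheta(-z;\tau)$ into the form $[\text{prefactor}] \cdot \prod_{n\geq 1}(1-q^n)(1-\zeta q^n)(1-\zeta^{-1}q^n)$, and the ratio of prefactors collapses to $-1$. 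For (2), the substitution $z\mapsto z+1$ leaves $\zeta = e^{2\pi i z}$ unchanged (hence the product is untouched) but multiplies $\zeta^{1/2} = e^{\pi i z}$ by $e^{\pi i} = -1$, yielding the required sign.

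Part (3) is the substantive claim, for which I plan to use Poisson summation. First, invoke the Jacobi triple product identity to convert the product into the Gaussian theta series
\begin{equation*}
\vartheta(z;\tau) = i\sum_{n\in\Z}(-1)^n e^{\pi i \tau(n+\frac{1}{2})^2+2\pi i(n+\frac{1}{2})z},
\end{equation*}
then absorb the sign $(-1)^n = e^{\pi i n}$ into the linear term of the exponent to cast the sum in the standard form $\sum_{n\in\Z}e^{-\pi t(n+a)^2+2\pi i nb}$ with $t = -i\tau$ having positive real part. Poisson summation in $n$ then produces the Fourier-dual sum, and the self-duality of the Gaussian under Fourier transform supplies the factor $1/\sqrt{t} = 1/\sqrt{-i\tau}$. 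Regrouping the exponential prefactors and recognising the dual sum as the series for $\vartheta(z/\tau;-1/\tau)$ (after the same Jacobi-triple-product manipulation in reverse) delivers the identity, with the leftover exponential combining to $e^{-\pi i z^2/\tau}$.

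The chief obstacle is pinning down the branch of $\sqrt{-i\tau}$, which must be the principal square root; this is cleanest to verify by specialising to $\tau = it$ with $t>0$, where $\sqrt{-i\tau} = \sqrt{t}$ is unambiguously positive and the Gaussian integral underlying Poisson summation is literally real-valued and positive. As a cleaner alternative that avoids Poisson summation entirely, one can instead define $G(z;\tau) := \frac{i}{\sqrt{-i\tau}}e^{-\pi i z^2/\tau}\vartheta(z/\tau;-1/\tau)$ and use (1), (2), together with the quasi-periodicity $\vartheta(z+\tau;\tau) = -e^{-\pi i\tau - 2\pi i z}\vartheta(z;\tau)$ (itself immediate from the product by the same splitting manoeuvre as in (1)), to show that the ratio $G(z;\tau)/\vartheta(z;\tau)$ is an entire, doubly-periodic function of $z$, hence constant in $z$ by Liouville's theorem; the constant is then fixed by matching derivatives at $z=0$ via $\vartheta'(0;\tau) = 2\pi\eta(\tau)^3$ and the classical modularity $\eta(-1/\tau) = \sqrt{-i\tau}\,\eta(\tau)$.
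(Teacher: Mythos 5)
The paper does not actually prove this lemma: it is stated as classical and attributed to Mumford's \emph{Tata Lectures on Theta}, so there is no internal proof to compare against. Your argument is correct and is essentially the standard derivation that the citation points to. Parts (1) and (2) are handled exactly right: peeling the $n=1$ factor off $\prod_{n\ge 1}(1-\zeta^{-1}q^{n-1})$ to symmetrise the product and tracking the prefactor via $(1-\zeta)=-\zeta(1-\zeta^{-1})$ gives the sign in (1), and (2) is immediate from $e^{\pi i}=-1$. For (3), both of your routes work: the Jacobi-triple-product expansion you quote, $\vartheta(z;\tau)=i\sum_{n\in\Z}(-1)^n e^{\pi i\tau(n+\frac12)^2+2\pi i(n+\frac12)z}$, is the correct series for this normalisation, Poisson summation with the branch pinned down on $\tau=it$, $t>0$, and then analytic continuation in $\tau$ is the classical proof, and the Liouville alternative is also sound, since both sides have simple zeros precisely on $\Z+\tau\Z$ and one checks they carry the same multipliers under $z\mapsto z+1$ and $z\mapsto z+\tau$. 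One small slip: with the paper's normalisation $\vartheta'(0;\tau)=-2\pi\eta(\tau)^3$ (note the minus sign, which the paper itself uses in Section 3), not $+2\pi\eta(\tau)^3$; this is harmless for your purpose, because the sign cancels in the ratio $\vartheta'(0;-1/\tau)/\vartheta'(0;\tau)$ that fixes the constant, which indeed evaluates to $1$ via $\eta(-1/\tau)=\sqrt{-i\tau}\,\eta(\tau)$.
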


Further, we have the following well-known modular transformation formula of $\eta$ (see e.g. \cite{siegel_1954}).

\begin{lemma}\label{Lemma: transformation of eta}
	We have that
	\begin{equation*}
	\begin{split}
	\eta\left( -\frac{1}{\tau} \right) = \sqrt{-i\tau} \eta(\tau).
	\end{split}
	\end{equation*}
\end{lemma}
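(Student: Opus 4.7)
The plan is to derive the transformation of $\eta$ by differentiating the transformation of $\vartheta$ already recorded in Lemma \ref{Lemma: transformation of theta}(3) with respect to $z$ at $z=0$, and combining this with Jacobi's derivative identity $\vartheta_z(0;\tau) = -2\pi\eta(\tau)^3$. This is the cleanest route in the present context, since the theta transformation is already available and no outside machinery (such as a Siegel-style contour argument) is needed.

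First I would verify the derivative identity directly from the product formula for $\vartheta$ given in the introduction. Isolating the $n=1$ factor of the third subproduct gives
\[
\vartheta(z;\tau) = i\zeta^{\frac{1}{2}}q^{\frac{1}{8}}(1-\zeta^{-1})\prod_{n\geq 1}(1-q^n)(1-\zeta q^n)(1-\zeta^{-1}q^n).
\]
As $z\to 0$, $\zeta^{1/2}\to 1$, $1-\zeta^{-1} = 2\pi i z + O(z^2)$, and the remaining product tends to $\prod_{n\geq 1}(1-q^n)^3 = q^{-1/8}\eta(\tau)^3$. Multiplying out and using $i\cdot i = -1$ yields $\vartheta(z;\tau) = -2\pi z\,\eta(\tau)^3 + O(z^2)$, whence $\vartheta_z(0;\tau) = -2\pi\eta(\tau)^3$.

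Next, differentiate both sides of Lemma \ref{Lemma: transformation of theta}(3) in $z$ and set $z=0$. The chain rule on the right produces two summands: one proportional to $\frac{-2\pi i z}{\tau}\vartheta(z/\tau;-1/\tau)$, which vanishes at $z=0$ both because of the leading $z$ and because $\vartheta(0;-1/\tau) = 0$, and one equal to $\frac{1}{\tau}\vartheta_z(z/\tau;-1/\tau)$. Hence
\[
\vartheta_z(0;\tau) \;=\; \frac{i}{\tau\sqrt{-i\tau}}\,\vartheta_z\!\left(0;-\tfrac{1}{\tau}\right).
\]
Substituting the Jacobi identity on both sides, cancelling the common factor $-2\pi$, and simplifying $\frac{i}{\tau\sqrt{-i\tau}} = (-i\tau)^{-3/2}$ (writing $w = -i\tau$, so $\tau\sqrt{-i\tau} = iw^{3/2}$) produces
\[
\eta\!\left(-\tfrac{1}{\tau}\right)^{\!3} \;=\; \bigl(\sqrt{-i\tau}\,\eta(\tau)\bigr)^{3}.
\]

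The remaining step is to take the correct cube root. With the principal branch of the square root, both $\eta(-1/\tau)$ and $\sqrt{-i\tau}\,\eta(\tau)$ are holomorphic and non-vanishing on $\H$, so their ratio $\omega(\tau)$ is a holomorphic map $\H \to \{1,e^{2\pi i/3},e^{4\pi i/3}\}$, hence constant by connectedness. Evaluating at the fixed point $\tau=i$, where $-1/\tau = i$ and $-i\tau = 1$, forces $\omega = 1$, yielding the claim. The only real subtlety is the bookkeeping of branch choices of the square root and the pinning down of the cube root of unity; the algebraic content follows mechanically from Lemma \ref{Lemma: transformation of theta}(3) once Jacobi's derivative formula is in hand.
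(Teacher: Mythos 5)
Your argument is correct, but it is not the route the paper takes: the paper offers no proof at all for this lemma, simply citing Siegel's classical contour-integration argument, which establishes $\log\eta(-1/\tau)-\log\eta(\tau)=\tfrac12\log(-i\tau)$ by a residue computation involving cotangent sums and is entirely independent of the theta function. Your derivation instead leverages material already present in the paper --- the product expansion of $\vartheta$, the modular inversion in Lemma \ref{Lemma: transformation of theta}(3), and Jacobi's derivative identity $\vartheta'(0;\tau)=-2\pi\eta(\tau)^3$ (which the paper also uses later, in the residue computation of Section \ref{Section: asymptotics}) --- so within this paper's ecosystem it is arguably the more economical choice, and every step checks out: the expansion $1-\zeta^{-1}=2\pi iz+O(z^2)$ gives the derivative identity, the first term of the product rule vanishes at $z=0$ because $\vartheta(0;\cdot)=0$, the simplification $i/(\tau\sqrt{-i\tau})=(-i\tau)^{-3/2}$ is valid on the principal branch since $-i\tau$ lies in the right half-plane for $\tau\in\H$, and the cube root of unity is correctly pinned down by continuity, connectedness of $\H$, nonvanishing of $\eta$, and evaluation at the fixed point $\tau=i$. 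The trade-off is that your proof presupposes the theta inversion formula (itself usually proved by Poisson summation), whereas Siegel's argument is self-contained and elementary; since the paper already assumes the theta inversion as Lemma \ref{Lemma: transformation of theta}, nothing circular occurs, and both approaches are legitimate.
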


\subsection{Euler Polynomials}
We will also make use properties of the Euler polynomials $E_r$, defined by the generating function
\begin{equation*}
\frac{2e^{xt}}{1+e^t} \eqqcolon \sum_{r \geq 0} E_r(x) \frac{t^r}{r!}.
\end{equation*}
Lemma 2.2 of \cite{bringmann2016dyson} shows that the following lemma holds.
\begin{lemma}\label{Lemma: sech in terms of E}
	We have
	\begin{equation*}
	-\frac{1}{2} \sech^2 \left( \frac{t}{2} \right) = \sum_{r \geq 0} E_{2r+1} (0) \frac{t^{2r}}{(2r)!}.
	\end{equation*}
\end{lemma}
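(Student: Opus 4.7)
The plan is to reduce the identity to the $x=0$ case of the Euler-polynomial generating function and then differentiate once. First I would manipulate the left-hand side algebraically: multiplying numerator and denominator of $\sech^2(t/2) = 4/(e^{t/2}+e^{-t/2})^2$ by $e^t$ yields $\sech^2(t/2) = 4e^t/(e^t+1)^2$. The key observation is then that $-\tfrac{1}{2}\sech^2(t/2)$ is precisely the $t$-derivative of the simpler function $2/(e^t+1)$, since $\tfrac{d}{dt}\bigl(2/(e^t+1)\bigr) = -2e^t/(e^t+1)^2$.

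Next, specialising the defining generating function $2e^{xt}/(1+e^t) = \sum_{r \geq 0} E_r(x) t^r/r!$ to $x=0$ gives $2/(1+e^t) = \sum_{r \geq 0} E_r(0) t^r/r!$. Differentiating this power series termwise in $t$ (which is legal on a disc where the series converges absolutely) and relabelling the summation index yields
\begin{equation*}
-\tfrac{1}{2}\sech^2(t/2) \;=\; \frac{d}{dt}\!\left(\frac{2}{1+e^t}\right) \;=\; \sum_{s \geq 0} E_{s+1}(0)\, \frac{t^s}{s!}.
\end{equation*}

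To finish the proof it remains to show that $E_{s+1}(0) = 0$ whenever $s$ is odd, equivalently $E_{2k}(0) = 0$ for every $k \geq 1$, so that only the terms with $s = 2r$ survive (giving odd index $2r+1$) and one recovers the claimed sum. I would establish this standard vanishing by noting that
\begin{equation*}
\frac{2}{1+e^t} - 1 \;=\; \frac{1-e^t}{1+e^t} \;=\; -\tanh\!\left(\frac{t}{2}\right)
\end{equation*}
is an odd function of $t$; hence in its Taylor expansion all even-degree coefficients of degree at least $2$ must vanish, which is exactly the statement $E_{2k}(0) = 0$ for $k \geq 1$ (the subtracted constant $1$ accounts for $E_0(0) = 1$). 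Substituting $s = 2r$ in the previous display then produces the desired identity. There is no serious obstacle here; the only points requiring care are the parity bookkeeping and the termwise differentiation, both of which are routine for the absolutely convergent series involved.
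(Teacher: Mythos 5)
Your argument is correct. The paper itself offers no proof of this lemma---it simply cites Lemma 2.2 of Bringmann--Dousse---and your derivation (writing $-\tfrac{1}{2}\sech^2(t/2)$ as $\tfrac{d}{dt}\bigl(\tfrac{2}{1+e^t}\bigr)$, differentiating the $x=0$ specialisation of the generating function termwise, and using $\tfrac{2}{1+e^t}=1-\tanh(t/2)$ to see that $E_{2k}(0)=0$ for $k\ge 1$, so only the odd-index terms survive) is the standard argument given in that reference.
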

Further, Lemma 2.3 of \cite{bringmann2016dyson} gives the following integral representation for the Euler polynomials.
\begin{lemma}
	We have that
	\begin{equation*}
	\mathcal{E}_j \coloneqq \int_{0}^{\infty} \frac{w^{2j+1}}{\sinh(\pi w)} dw = \frac{(-1)^{j+1} E_{2j+1} (0)}{2}.
	\end{equation*}
\end{lemma}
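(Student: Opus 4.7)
The plan is to assemble a generating function for the integrals $\mathcal{E}_j$ and match it against the generating function for $E_{2r+1}(0)$ supplied by Lemma \ref{Lemma: sech in terms of E}. Concretely, I would set
\begin{equation*}
G(t) \coloneqq \sum_{j \ge 0} (-1)^{j} \mathcal{E}_j \, \frac{t^{2j+1}}{(2j+1)!},
\end{equation*}
and use the Taylor series $\sin(tw) = \sum_{j\ge 0}(-1)^j (tw)^{2j+1}/(2j+1)!$ together with Fubini (justified, for $|t| < \pi$, by the absolute integrability of $\sin(tw)/\sinh(\pi w)$, since the integrand decays like $e^{-\pi w}$ at infinity and is bounded near $0$) to rewrite $G$ as a single integral
\begin{equation*}
G(t) = \int_0^\infty \frac{\sin(tw)}{\sinh(\pi w)}\, dw.
\end{equation*}

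The next step is to evaluate this classical integral as $G(t) = \tfrac12 \tanh(t/2)$. A direct route is to expand $1/\sinh(\pi w) = 2 \sum_{n \ge 0} e^{-(2n+1)\pi w}$, integrate term by term using $\int_0^\infty \sin(tw) e^{-aw}\, dw = t/(a^2+t^2)$, and then recognise
\begin{equation*}
G(t) = \sum_{n \ge 0} \frac{2t}{(2n+1)^2 \pi^2 + t^2} = \frac12 \tanh\!\left(\frac{t}{2}\right)
\end{equation*}
via the Mittag-Leffler partial-fraction expansion of $\tanh$. Alternatively, a rectangular contour of height $i$, exploiting $\sinh(\pi(w+i)) = -\sinh(\pi w)$, yields the same closed form.

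With this identification in hand, I would differentiate both representations of $G$ to obtain
\begin{equation*}
\sum_{j \ge 0} (-1)^j \mathcal{E}_j \, \frac{t^{2j}}{(2j)!} = G'(t) = \frac{1}{4} \sech^2\!\left(\frac{t}{2}\right) = -\frac12 \sum_{r \ge 0} E_{2r+1}(0) \, \frac{t^{2r}}{(2r)!},
\end{equation*}
where the last equality is Lemma \ref{Lemma: sech in terms of E}. Matching coefficients of $t^{2j}$ gives $(-1)^j \mathcal{E}_j = -\tfrac12 E_{2j+1}(0)$, which is the claimed identity. The only non-routine ingredient is the closed-form evaluation of the sine-over-sinh integral; once that is in place the rest of the argument is a purely formal comparison of generating functions, and the main obstacle is thus identifying the correct auxiliary series $G(t)$ whose derivative matches the $\sech^2$ identity already on hand.
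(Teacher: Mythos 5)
Your proof is correct, but there is nothing in the paper to compare it against: the paper does not prove this lemma at all, it imports it verbatim as Lemma 2.3 of \cite{bringmann2016dyson}. Your argument is a clean, self-contained substitute for that citation, and all the constants check out. Summing $\sin(tw)=\sum_{j\ge 0}(-1)^j(tw)^{2j+1}/(2j+1)!$ against $1/\sinh(\pi w)$ identifies your generating function with $G(t)=\int_0^\infty \sin(tw)/\sinh(\pi w)\,dw$; the expansion $1/\sinh(\pi w)=2\sum_{n\ge 0}e^{-(2n+1)\pi w}$, the elementary integral $\int_0^\infty \sin(tw)e^{-aw}\,dw=t/(a^2+t^2)$, and the partial-fraction expansion $\tanh(t/2)=\sum_{n\ge 0}4t/\bigl(t^2+(2n+1)^2\pi^2\bigr)$ give $G(t)=\tfrac12\tanh(t/2)$; and differentiating and comparing with Lemma \ref{Lemma: sech in terms of E} yields $(-1)^j\mathcal{E}_j=-\tfrac12 E_{2j+1}(0)$, which is the claim. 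Two small points of rigour worth tightening. First, the interchange of sum and integral is not justified by ``absolute integrability of $\sin(tw)/\sinh(\pi w)$'' but by Tonelli applied to the termwise absolute values, i.e.\ by the finiteness of $\int_0^\infty \sinh(|t|w)/\sinh(\pi w)\,dw$; this is exactly where the restriction $|t|<\pi$ enters, and that restriction is harmless since equality of two power series on any neighbourhood of $0$ determines all coefficients. Second, the termwise integration of the geometric series against $\sin(tw)$ also needs a one-line domination, e.g.\ $\int_0^\infty |t|w\,e^{-(2n+1)\pi w}\,dw=|t|/((2n+1)\pi)^2$ is summable in $n$. Neither point is a gap in the idea, only in the stated justification.
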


\subsection{A particular bound}
In Section \ref{Section: bounds away from dominant pole} we require a bound on the size of 
\begin{equation*}
P(q) \coloneqq \frac{q^{\frac{1}{24}}}{\eta(\tau)},
\end{equation*}
away from the pole at $q=1$. For this we use the following lemma which is shown to hold in Lemma 3.5 of \cite{bringmann2016dyson}.
\begin{lemma}
	Let $\tau = u +iv \in \H$ with $Mv \leq u \leq \frac{1}{2}$ for $u>0$ and $v \rightarrow 0$. Then
	\begin{equation*}
	|P(q)| \ll \sqrt{v} \exp \left[ \frac{1}{v} \left(\frac{\pi}{12} - \frac{1}{2\pi} \left(1- \frac{1}{\sqrt{1+M^2}}\right)\right) \right].
	\end{equation*}
\end{lemma}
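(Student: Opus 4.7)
The strategy is to reduce the estimate of $P(q)$ to a well-controlled expression in $v$ and $|\tau|$ via the modular transformation of $\eta$ (Lemma 2.3). Applying $\eta(-1/\tau) = \sqrt{-i\tau}\,\eta(\tau)$ directly yields
\[
|P(q)| = \frac{|q|^{1/24}\,|\tau|^{1/2}}{|\eta(-1/\tau)|},
\]
so the task reduces to a lower bound on $|\eta(\tau')|$, where $\tau' := -1/\tau = u' + iv'$ with $v' = v/|\tau|^2$. Expanding $|\eta(\tau')| = e^{-\pi v'/12}\prod_{n\geq 1}|1 - q'^n|$ and invoking the triangle inequality $|1 - q'^n| \geq 1 - e^{-2\pi n v'}$ term by term gives the clean uniform bound $|\eta(\tau')| \geq \eta(iv')$.

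The next step is to bound $\eta(iv')$ from below, and here I would split into two regimes according to whether $v' \geq 1$ or $v' < 1$. In the dominant regime $v' \geq 1$ (equivalently $|\tau|^2 \leq v$, which contains the crucial left endpoint $u = Mv$), the product $\prod_n (1 - e^{-2\pi n v'})$ is bounded below by a universal positive constant, so $\eta(iv') \gg e^{-\pi v'/12}$. Substituting back and noting that $-\pi v/12 + \pi v'/12 = \pi v(1 - |\tau|^2)/(12|\tau|^2) \leq \pi v/(12|\tau|^2)$, one obtains
\[
|P(q)| \ll |\tau|^{1/2}\exp\!\left(\frac{\pi v}{12|\tau|^2}\right).
\]
When $v' < 1$, applying the modular transformation a second time via $\eta(iv') = \eta(i/v')/\sqrt{v'}$ reduces the problem to the previous regime applied to $i/v'$; this yields a bound whose exponent is at most $\pi/(48v)$, which is strictly smaller than $\pi/12 - 1/(2\pi)$ and hence easily absorbed into the claimed estimate.

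It remains to optimize under the constraints. From $u \geq Mv$ we get $|\tau|^2 \geq (1+M^2)v^2$, and one checks that $s \mapsto s^{1/4}\exp(\pi v/(12s))$ has a single minimum at $s = \pi v/3$ and is decreasing to the left of it. For $v$ sufficiently small the admissible range $s \in [(1+M^2)v^2,\,1/4+v^2]$ straddles the minimum and the supremum is attained at the left endpoint, producing
\[
|P(q)| \ll (1+M^2)^{1/4}\sqrt{v}\,\exp\!\left(\frac{\pi}{12(1+M^2)v}\right).
\]
To match the stated form, one verifies the elementary inequality
\[
\frac{\pi}{12(1+M^2)} \leq \frac{\pi}{12} - \frac{1}{2\pi}\left(1 - \frac{1}{\sqrt{1+M^2}}\right),
\]
equivalently $6(1 - 1/\sqrt{1+M^2}) \leq \pi^2 M^2/(1+M^2)$; both sides vanish at $M=0$, while the second-order Taylor coefficient of the right side ($\pi^2/6$) exceeds that of the left side ($1/2$), and a direct check at large $M$ completes the comparison.

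The main anticipated obstacle is the bookkeeping in the case split $v' \gtrless 1$: one must verify carefully that the polynomial prefactor in the auxiliary case $v' < 1$ ($|\tau|^{1/2}\sqrt{v'} = \sqrt{v}/|\tau|^{1/2} \leq v^{1/4}$) is dominated as $v \to 0$ by the gap between the exponents $\pi/(48v)$ and $(\pi/12 - \epsilon_M)/v$. Once that is in hand, the two regimes combine into the uniform bound claimed.
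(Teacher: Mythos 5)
Your argument is correct, but it follows a genuinely different route from the one the paper relies on: the paper does not prove this lemma at all, it imports it verbatim as Lemma 3.5 of Bringmann--Dousse, whose proof is logarithmic rather than modular. There one writes $\log|P(q)|=\operatorname{Re}\sum_{n\ge1}\frac{q^n}{n(1-q^n)}$, bounds every term with $n\ge2$ by the corresponding term of $\log P(|q|)$, and extracts the entire saving from the single term $n=1$ via $|1-q|\ge 2\pi\sqrt{u^2+v^2}\,(1+o(1))\ge 2\pi v\sqrt{1+M^2}\,(1+o(1))$ against $1-|q|=2\pi v(1+o(1))$; combined with the classical asymptotic $P(e^{-2\pi v})\sim\sqrt{v}\,e^{\pi/(12v)}$ this produces exactly the exponent $\frac{\pi}{12}-\frac{1}{2\pi}\bigl(1-\frac{1}{\sqrt{1+M^2}}\bigr)$, with no case analysis and no optimization over $|\tau|$. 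Your inversion-plus-triangle-inequality argument is sound (I checked the reduction $|P(q)|=|q|^{1/24}|\tau|^{1/2}/|\eta(-1/\tau)|$, the lower bound $|\eta(\tau')|\ge\eta(iv')$, both regimes of $v'$, and the endpoint analysis) and in fact proves the \emph{stronger} exponent $\frac{\pi}{12(1+M^2)v}$, which is essentially sharp at $u=Mv$; the price is the case split, the harmless extra factor $(1+M^2)^{1/4}$ (irrelevant here since $M=m^{-1/3}\le1$ in the application), and the final elementary comparison. On that last point, your verification of $6\bigl(1-\frac{1}{\sqrt{1+M^2}}\bigr)\le\frac{\pi^2M^2}{1+M^2}$ by a Taylor coefficient at $M=0$ plus a check at $M=\infty$ is not a proof on all of $(0,\infty)$; but setting $t=(1+M^2)^{-1/2}$ and cancelling the factor $1-t$ reduces it to $1+t\ge 6/\pi^2$, which is immediate, so this is a one-line fix rather than a genuine gap.
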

In particular, with $v = \frac{\beta}{2\pi}$, $u=\frac{\beta m^{-\frac{1}{3}} x}{2 \pi}$ and $M = m^{-\frac{1}{3}}$ this gives for $1 \leq x \leq \frac{\pi m^{\frac{1}{3}}}{\beta}$ the bound
\begin{equation}\label{Equation: bound on P(q)}
|P(q)| \ll n^{-\frac{1}{4}} \exp \left[ \frac{2 \pi}{\beta} \left( \frac{\pi}{12} - \frac{1}{2\pi} \left( 1- \frac{1}{\sqrt{1 + m^{-\frac{2}{3}}}} \right) \right) \right].
\end{equation}

\subsection{$I$-Bessel functions}
Here we recall relevant results on the $I$-Bessel function defined by
\begin{equation*}
I_{\ell} (x) \coloneqq \frac{1}{2 \pi i} \int_{\Gamma} t^{-\ell - 1}e^{\frac{x}{2} \left(t+\frac{1}{t} \right)} dt,
\end{equation*}
where $\Gamma$ is a contour which starts in the lower half plane at $-\infty$, surrounds the origin counterclockwise and returns to $-\infty$ in the upper half-plane. We are particularly interested in the asymptotic behaviour of $I_\ell$, given in the following lemma (see e.g. (4.12.7) of \cite{andrews_askey_roy_1999}).
\begin{lemma}\label{Lemma: asymptotic of I Bessel}
	For fixed $\ell$ we have
	\begin{equation*}
	I_\ell(x) = \frac{e^x}{\sqrt{2 \pi x}} + O\left(\frac{e^x}{x^{\frac{3}{2}}}\right)
	\end{equation*}
	as $x \rightarrow \infty$.
\end{lemma}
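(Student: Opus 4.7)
The plan is to evaluate the loop integral $I_\ell(x) = \frac{1}{2\pi i}\int_\Gamma t^{-\ell-1} e^{\frac{x}{2}(t + 1/t)}\,dt$ by the classical saddle-point method. Writing the exponent as $x\phi(t)$ with $\phi(t) \coloneqq \tfrac{1}{2}(t+1/t)$, the saddle-point equation $\phi'(t) = \tfrac{1}{2}(1 - t^{-2}) = 0$ has solutions $t = \pm 1$, with values $\phi(1) = 1$ and $\phi(-1) = -1$. Only $t = 1$ contributes to the exponential growth $e^x$, since the saddle at $t = -1$ yields a contribution of size $O(e^{-x})$, which is negligible compared with the claimed error. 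I would therefore deform $\Gamma$, without crossing the origin, into a contour that approaches $-\infty$ in both half-planes and crosses the positive real axis vertically through $t = 1$, corresponding to the steepest-descent direction of $\phi$ at this saddle.

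Next I would carry out the local analysis at $t = 1$. Parametrising the descent contour as $t = 1 + is$ with $s \in \R$, Taylor expansion gives $\phi(1+is) = 1 - \tfrac{1}{2}s^2 + O(s^3)$ and $(1+is)^{-\ell-1} = 1 + O(s)$, while $dt = i\,ds$. On a shrinking neighbourhood $|s| \leq x^{-1/2 + \varepsilon}$ of the saddle the integrand therefore equals $\frac{e^x}{2\pi}\, e^{-xs^2/2}\bigl(1 + O(s) + O(x s^3)\bigr)$, so integrating in $s$ yields the main term $e^x/\sqrt{2\pi x}$ via the Gaussian $\int_\R e^{-xs^2/2}\,ds = \sqrt{2\pi/x}$. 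The subleading corrections — the odd $O(s)$ part of the prefactor integrates to zero, while its $O(s^2)$ piece and the $O(xs^3)$ remainder from $\phi$ both feed into Gaussian moments of order $x^{-3/2}$ — produce precisely the claimed error $O(e^x/x^{3/2})$.

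Finally I would bound the remainder of the contour, away from the saddle, by using the strict decrease of $\re \phi$ along the steepest-descent path: there the integrand is bounded by $e^{x(1-c)}$ for some $c > 0$, and thus contributes exponentially less than $e^x/x^{3/2}$. The main obstacle is not the algebra but making the contour deformation and the tail estimate fully rigorous as $x \to \infty$; this is entirely standard saddle-point machinery, and a careful execution for precisely this integral representation of $I_\ell$ may be found in Section~4.12 of \cite{andrews_askey_roy_1999}, from which the stated asymptotic follows.
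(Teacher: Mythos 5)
Your saddle-point sketch is correct: the dominant saddle of $\phi(t)=\tfrac12(t+1/t)$ at $t=1$ with the vertical steepest-descent parametrisation gives exactly the main term $e^x/\sqrt{2\pi x}$, the stated corrections account for the $O(e^x/x^{3/2})$ error, and the tails and the saddle at $t=-1$ are exponentially negligible. The paper itself offers no proof and simply cites (4.12.7) of \cite{andrews_askey_roy_1999}, which is the same standard argument you outline (and ultimately defer to), so your proposal is in essence the same approach.
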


We also require the behaviour of an integral related to the $I$-Bessel function. Define
\begin{equation*}
P_{s} \coloneqq \frac{1}{2 \pi i} \int_{1-im^{-\frac{1}{3}}}^{1+im^{-\frac{1}{3}}} v^s e^{\pi \sqrt{2n} \left(v + \frac{1}{v}\right)} dv.
\end{equation*}
Then Lemma 4.2 of \cite{bringmann2016dyson} reads as follows.
\begin{lemma}\label{Lemma: P in terms of I Bessel}
	For $|m| \leq \frac{1}{6 \beta} \log(n)$ we have
	\begin{equation*}
	P_s = I_{-s-1}\left( 2 \pi \sqrt{2n}\right) + O\left( \exp\left(\pi \sqrt{2n}\left(1+ \frac{1}{1+ |m|^{-\frac{2}{3}}}\right)\right) \right)
	\end{equation*}
	as $n \rightarrow \infty$.
\end{lemma}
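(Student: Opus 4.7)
The plan is to recognize that the integrand in $P_s$ is, up to a relabeling $\ell = -s-1$, exactly the integrand in the Hankel integral representation of $I_{-s-1}(2\pi\sqrt{2n})$. Indeed, with $\ell = -s-1$ and $x = 2\pi\sqrt{2n}$ one has
\[
I_{-s-1}(2\pi\sqrt{2n}) = \frac{1}{2\pi i} \int_\Gamma v^s e^{\pi\sqrt{2n}\left(v + \tfrac{1}{v}\right)} dv.
\]
So the task reduces to showing that one may deform $\Gamma$ to a contour that contains the vertical segment $[1-im^{-1/3}, 1+im^{-1/3}]$ (which contributes $P_s$) plus auxiliary pieces whose total contribution is bounded by the claimed error.

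Concretely, let $r := |1 + im^{-1/3}| = \sqrt{1 + m^{-2/3}}$ and deform $\Gamma$ to the Hankel-type contour consisting of: (a) the negative real axis from $-\infty$ to $-r$ traversed below the cut, (b) the lower arc of the circle $|v|=r$ from $-r$ to $1 - im^{-1/3}$, (c) the vertical segment from $1-im^{-1/3}$ up to $1+im^{-1/3}$, (d) the upper arc of $|v|=r$ from $1+im^{-1/3}$ back to $-r$, and (e) the negative real axis from $-r$ back to $-\infty$ above the cut. Since the integrand is holomorphic away from $v=0$, this deformation is legitimate, and segment (c) contributes exactly $P_s$.

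The key estimate is on the circular arcs (b) and (d). Parametrizing $v = re^{i\theta}$ gives $\mathrm{Re}(v + 1/v) = (r+r^{-1})\cos\theta$, which is maximized on these arcs at the endpoints $1 \pm im^{-1/3}$, where $\cos\theta = 1/r$. This yields the uniform bound
\[
\mathrm{Re}\left(v + \tfrac{1}{v}\right) \leq 1 + \tfrac{1}{r^2} = 1 + \tfrac{1}{1 + m^{-2/3}} \qquad \text{on arcs (b), (d)}.
\]
Combined with $|v^s| = r^s = O(1)$ (as $r \leq \sqrt{2}$ for $m \neq 0$) and the fact that the arcs have length $O(1)$, the contribution of (b) and (d) is $O\!\left(\exp\!\left(\pi\sqrt{2n}\left(1+\tfrac{1}{1+|m|^{-2/3}}\right)\right)\right)$, which is precisely the stated error.

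The remaining step is to absorb the contributions from the rays (a) and (e) into this error. On these rays one writes $v = -t$ with $t \geq r \geq 1$, so that $\mathrm{Re}(v + 1/v) = -(t + 1/t) \leq -2$, giving
\[
\left|\int_r^\infty t^s e^{-\pi\sqrt{2n}\left(t + \tfrac{1}{t}\right)} dt\right| \ll e^{-\pi\sqrt{2n}},
\]
which is dominated by the error term above. I expect the main obstacle to be handling the exponent bound on the arcs sharply — i.e., verifying that $(r+r^{-1})\cos\theta$ is indeed maximized at the segment endpoints rather than elsewhere on the arc — since an easier but weaker bound would fail to match the exponent $1 + \tfrac{1}{1+|m|^{-2/3}}$ appearing in the error, which is precisely what is needed so that the error is eventually dominated by the main term $I_{-s-1}(2\pi\sqrt{2n}) \sim e^{2\pi\sqrt{2n}}/n^{1/4}$ under the hypothesis $|m| \leq \tfrac{1}{6\beta}\log(n)$.
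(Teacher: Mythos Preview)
Your argument is correct and is essentially the proof given in Bringmann--Dousse (Lemma~4.2 of \cite{bringmann2016dyson}), which the present paper cites without reproducing: one deforms the Hankel contour for $I_{-s-1}(2\pi\sqrt{2n})$ to the vertical segment together with circular arcs of radius $r=\sqrt{1+m^{-2/3}}$ and rays along the negative real axis, and the key bound $\mathrm{Re}(v+1/v)=(r+r^{-1})\cos\theta\le 1+r^{-2}$ on the arcs yields exactly the stated error. Your identification of the sharp point---that the maximum of $\cos\theta$ on the arcs occurs at the segment endpoints where $\cos\theta=1/r$---is precisely the observation that makes the exponent match.
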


\section{Asymptotic behaviour of $f$}\label{Section: asymptotics}
The aim of this Section is to determine the asymptotic behaviour of $f$. To do so we consider two separate cases: when $q$ tends toward the pole $q=1$, and when $q$ is away from this pole. It is shown that the behaviour toward the pole at $q=1$ gives the dominant contribution when applying the circle method in Section \ref{Section: Circle method}.

First note that Lemma \ref{Lemma: transformation of theta} implies that $f(-z;\tau) = -f(z;\tau)$, and so $b(-m,n) = -b(m,n)$. We now restrict our attention to the case of $m\geq 0$.

We next find the Fourier coefficient of $\zeta^m$ of $f$, following the framework of \cite{dabholkar2012quantum}. Since there is a pole of $f$ at $z = \frac{1}{2}$, we define
\begin{equation*}
\begin{split}
f_m^\pm (\tau) \coloneqq & \int_{0}^{\frac{1}{2} - a} f(z;\tau) e^{-2 \pi i m z} dz + \int_{\frac{1}{2} + a}^{1} f(z;\tau) e^{-2 \pi i m z} dz + G^\pm \\
= & -2i \int_{0}^{\frac{1}{2} - a} f(z;\tau) \sin(2 \pi m z) dz + G^\pm,
\end{split}
\end{equation*}
where $a>0$ is small, and
\begin{equation*}
G^\pm \coloneqq \int_{\frac{1}{2}-a}^{\frac{1}{2}+a} f(z;\tau) e^{-2 \pi i m z} dz.
\end{equation*} 
For $G^+$ the integral is taken over a semi-circular path passing above the pole. Similarly, $G^-$ is taken over a semi-circular path passing below the pole. Then the Fourier coefficient of $\zeta^m$ of $f$ is
\begin{equation}\label{Equation: f_m split into two integrals}
f_m (\tau) \coloneqq \frac{f_m^+ + f_m^-}{2} = -2 i \lim_{a \rightarrow 0^+} \int_{0}^{\frac{1}{2} - a} f(z;\tau) \sin(2 \pi m z) dz  + \frac{G^+ +G^-}{2}.
\end{equation}

Letting $z \mapsto 1-z$ maps the path of integration from $0$ to $1$ passing above the pole to the path from $1$ to $0$ passing below the pole. One then sees directly that
\begin{align}\label{f_m(tau)}
f_m (\tau) = -2 i \int_{0}^{\frac{1}{2}} f(z;\tau) \sin(2 \pi m z) dz.
\end{align}

In the following two subsections we determine the asymptotic behaviour of $f$ toward and away from the dominant pole at $q=1$ respectively. Throughout, we will let $\tau = \frac{i \varepsilon}{2 \pi}$, $\varepsilon \coloneqq \beta(1 + ix m^{-\frac{1}{3}})$ and $\beta \coloneqq \pi \sqrt{\frac{2}{n}}$. We determine asymptotics as $n \rightarrow \infty$.
\subsection{Bounds towards the dominant pole}\label{Section: bounds toward dominant pole}
Here we find the asymptotic behaviour of $f$ toward the dominant pole at $q=1$, shown in the following lemma.

\begin{lemma}\label{Lemma: main asmyptotic term of f at 0}
	Let $\tau = \frac{i \varepsilon}{2\pi}$, with $0 < \re(\varepsilon) \ll 1$, and $0 < z < \frac{1}{2}$. Then we have that
	\begin{equation*}
	f\left(z;\frac{i \varepsilon}{2 \pi}  \right) = -\frac{\varepsilon^3}{\pi^3}  \frac{\sinh\left( \frac{2 \pi^2 z}{\varepsilon} \right)^4}{ \sinh\left( \frac{4 \pi^2 z}{\varepsilon} \right) }  \left( 1 +  e^{ -4\pi^2 \re\left(\frac{1}{\varepsilon}\right) (1-2z)} + O\left( e^{ -4\pi^2 \re\left(\frac{1}{\varepsilon}\right) (1-z)  }  \right)  \right).
	\end{equation*}
	
\end{lemma}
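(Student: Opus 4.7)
The approach is to use the modular transformations of $\vartheta$ and $\eta$ under $\tau\mapsto-1/\tau$ (Lemmas~\ref{Lemma: transformation of theta}(iii) and~\ref{Lemma: transformation of eta}) to rewrite $f(z;\tau)$ in terms of the inverted modular parameter $-1/\tau$. Under the parametrisation $\tau=i\varepsilon/(2\pi)$ with $\re(\varepsilon)$ small and positive, the resulting nome $Q\coloneqq e^{-4\pi^{2}/\varepsilon}$ is exponentially small in $\re(1/\varepsilon)$, so the Jacobi product expansions converge rapidly and can be controlled explicitly.

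First I would apply Lemma~\ref{Lemma: transformation of theta}(iii) to the four copies of $\vartheta(z;\tau)$ in the numerator and to the single $\vartheta(2z;\tau)$ in the denominator. The quadratic-exponential factors $e^{-\pi iz^{2}/\tau}$ (four from the numerator) and $e^{-4\pi iz^{2}/\tau}$ (one from the denominator) cancel exactly, and the various powers of $\sqrt{-i\tau}$ combine with those from Lemma~\ref{Lemma: transformation of eta} applied to $\eta(\tau)^{9}$ to yield the clean identity $f(z;\tau)=\tau^{3}f(z/\tau;-1/\tau)$. Specialising to $\tau=i\varepsilon/(2\pi)$ this becomes
\begin{equation*}
f\!\left(z;\tfrac{i\varepsilon}{2\pi}\right)=-\frac{i\varepsilon^{3}}{8\pi^{3}}\,f\!\left(-\tfrac{2\pi iz}{\varepsilon};\,\tfrac{2\pi i}{\varepsilon}\right).
\end{equation*}

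Next I would insert the Jacobi product expansions of $\vartheta$ and $\eta$ in the nome $Q$. The crucial observation is that the $n=1$ term of the factor $(1-\zeta'^{-1}q'^{n-1})$ at the arguments $-2\pi iz/\varepsilon$ and $-4\pi iz/\varepsilon$ supplies the factors $(1-Q^{z})$ and $(1-Q^{2z})$, which are the hyperbolic sines in disguise via $1-Q^{a}=2Q^{a/2}\sinh(2\pi^{2}a/\varepsilon)$. After pulling these factors out and cancelling the leftover half-integer powers of $Q$ (the $Q^{1/2}$ appearing in both $\vartheta$ products against the $Q^{3/8}$ from $\eta^{9}$ together with the $Q^{1/8}$ from $\vartheta(2\tilde z;\tau')$), the prefactor collapses to exactly $-\varepsilon^{3}/\pi^{3}\cdot\sinh(2\pi^{2}z/\varepsilon)^{4}/\sinh(4\pi^{2}z/\varepsilon)$, multiplied by the residual ratio of infinite products
\begin{equation*}
\Pi\coloneqq\prod_{n\geq 1}\frac{(1-Q^{n-z})^{4}(1-Q^{n+z})^{4}}{(1-Q^{n})^{6}(1-Q^{n-2z})(1-Q^{n+2z})}.
\end{equation*}

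The final step is to show that $\Pi=1+e^{-4\pi^{2}\re(1/\varepsilon)(1-2z)}+O(e^{-4\pi^{2}\re(1/\varepsilon)(1-z)})$. Since $0<z<1/2$ every exponent appearing in $\Pi$ is strictly positive, so each factor $(1-Q^{a})$ equals $1+O(|Q|^{a})$ with $|Q|^{a}=e^{-4\pi^{2}\re(1/\varepsilon)\,a}$. Listing the exponents in increasing order for $n=1$, namely $1-2z<1-z<1<1+z<1+2z$, and noting that the next exponent $2-2z$ (from $n=2$) is strictly larger than $1-z$, the unique smallest exponent is $1-2z$, which appears exactly once in the single denominator factor $(1-Q^{1-2z})$. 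Expanding $\log\Pi$ as a double series via $\log(1-x)=-\sum_{k\geq 1}x^{k}/k$ then isolates the dominant correction $+Q^{1-2z}$ and collects every remaining contribution into an $O(e^{-4\pi^{2}\re(1/\varepsilon)(1-z)})$ remainder; exponentiating recovers the claim. The main bookkeeping obstacle is the careful tracking of the half-integer powers of $Q$ in the prefactor after the modular transformation, and ensuring uniform control of the error in $\Pi$ as $z$ approaches the endpoints of $(0,1/2)$.
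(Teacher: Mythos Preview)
Your proposal is correct and follows essentially the same route as the paper: apply the modular inversion $f(z;\tau)=\tau^{3}f(z/\tau;-1/\tau)$ (the paper derives this directly from Lemmas~\ref{Lemma: transformation of theta} and~\ref{Lemma: transformation of eta} rather than stating it as a Jacobi-form transformation), pull out the $\sinh$ prefactor from the $n=1$ terms of the Jacobi products, and then estimate the residual infinite product $\Pi$ in the small nome $Q=e^{-4\pi^{2}/\varepsilon}$. The only cosmetic difference is that the paper expands the denominator factors of $\Pi$ as geometric series, whereas you expand $\log\Pi$; both isolate the same leading correction $Q^{1-2z}$ and the same next exponent $1-z$.
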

\begin{proof}
	Using the modularity of $f$ (which follows from Lemmas \ref{Lemma: transformation of theta} and \ref{Lemma: transformation of eta}) and setting $q_0 \coloneqq e^{-\frac{2 \pi i}{\tau}}$, we have that 
	\begin{equation*}
	\begin{split}
f(z;\tau) & = \frac{\tau^3 \zeta^{\frac{2}{\tau}} \prod_{n \geq 1} \left(1 - \zeta^{\frac{1}{\tau}} q_0^n\right)^4 \left( 1-\zeta^{-\frac{1}{\tau}} q_0^{n-1}  \right)^4      }{ i \zeta^{\frac{1}{\tau}}  \prod_{n \geq 1} \left( 1-q_0^n \right)^6 \left( 1-\zeta^{\frac{2}{\tau}} q_0^n  \right) \left( 1-\zeta^{\frac{-2}{\tau}} q_0^{n-1} \right)     } \\
& = \frac{\tau^3  \left( \zeta^{\frac{1}{2\tau}} - \zeta^{-\frac{1}{2\tau}} \right)^4 }{ i \left( \zeta^{\frac{1}{\tau}} - \zeta^{\frac{-1}{\tau}} \right)  } \prod_{n \geq 1} \frac{   \left(1 - \zeta^{\frac{1}{\tau}} q_0^n\right)^4 \left( 1-\zeta^{-\frac{1}{\tau}} q_0^n  \right)^4      }{ \left( 1-q_0^n \right)^6 \left( 1-\zeta^{\frac{2}{\tau}} q_0^n  \right) \left( 1-\zeta^{\frac{-2}{\tau}} q_0^n \right)     }.
	\end{split}
	\end{equation*}
	This gives
	\begin{equation*}
	-\frac{\varepsilon^3}{ \pi^3} \frac{\sinh\left( \frac{2 \pi^2 z}{\varepsilon} \right)^4}{ \sinh\left( \frac{4 \pi^2 z}{\varepsilon} \right) } \prod_{n \geq 1} \frac{   \left(1 - e^{\frac{4\pi^2}{\varepsilon}(z-n) }\right)^4 \left( 1-e^{\frac{4\pi^2}{\varepsilon}(-z-n) }  \right)^4      }{ \left( 1-e^{\frac{-4\pi^2 n}{\varepsilon} } \right)^6 \left( 1-e^{\frac{4\pi^2}{\varepsilon}(2z-n) }  \right) \left( 1-e^{\frac{4\pi^2}{\varepsilon}(-2z-n) } \right)     }.
	\end{equation*}
	
	In order to find a bound we expand the denominator using geometric series. For $0 < z < \frac{1}{2}$ we see that $| e^{\frac{4\pi^2}{\varepsilon} (\pm 2z - n)} |<1$ for all $n \geq 1$, and so we expand the denominator to obtain the product as
	\begin{equation*}
	\prod_{n \geq 1}   \left(1- e^{\frac{4\pi^2 }{\varepsilon} (z-n)}   \right)^4 \left( 1 - e^{\frac{-4\pi^2 }{\varepsilon} (z+n) }  \right)^4   \sum_{j \geq 0} e^{\frac{4 j \pi^2}{\varepsilon} (2z - n)} \sum_{k \geq 0} e^{\frac{- 4 k \pi^2}{\varepsilon} (2z + n)} \left(\sum_{\ell \geq 0} e^{\frac{-4 \pi^2 \ell n}{\varepsilon}} \right)^6,
	\end{equation*}
	which, for $0 < \re(\varepsilon) \ll 1$, is of order 
	\begin{equation*}
	1 + e^{ -4\pi^2 \re\left(\frac{1}{\varepsilon}\right) (1-2z)} + O\left( e^{ -4\pi^2 \re\left(\frac{1}{\varepsilon}\right) (1-z)  }  \right).
	\end{equation*}
	 Hence overall we find that 
	\begin{equation*}
	f\left(z;\frac{i \varepsilon}{2 \pi}  \right) =  -\frac{\varepsilon^3}{\pi^3}  \frac{\sinh\left( \frac{2 \pi^2 z}{\varepsilon} \right)^4}{ \sinh\left( \frac{4 \pi^2 z}{\varepsilon} \right) }  \left( 1 +  e^{ -4\pi^2 \re\left(\frac{1}{\varepsilon}\right) (1-2z)} + O\left( e^{ -4\pi^2 \re\left(\frac{1}{\varepsilon}\right) (1-z)  }  \right) \right),
	\end{equation*} 
	yielding the claim.
\end{proof}
\begin{remark}
	It is easy to see that this gives the same main term as noted in Section 4.5 of \cite{he2015notes} (up to sign, which the authors there do not make use of).
\end{remark}
Since $f(z;\tau) = - f(1-z;\tau)$ we see this immediately also implies the following lemma.
\begin{lemma}
	Let $\tau = \frac{i \varepsilon}{2\pi}$, with $0 < \re(\varepsilon) \ll 1$, and $\frac{1}{2} < z < 1$. Then we have that
	\begin{equation*}
	f\left(z;\frac{i \varepsilon}{2 \pi}  \right) = \frac{\varepsilon^3}{\pi^3}  \frac{\sinh\left( \frac{2 \pi^2 (1-z)}{\varepsilon} \right)^4}{ \sinh\left( \frac{4 \pi^2 (1-z)}{\varepsilon} \right) }  \left( 1 + e^{ -4\pi^2 \re\left(\frac{1}{\varepsilon}\right) (2z-1)} + O\left( e^{ -4\pi^2 \re\left(\frac{1}{\varepsilon}\right) z  }  \right)  \right).
	\end{equation*}
\end{lemma}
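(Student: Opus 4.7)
The plan is to exploit the elementary symmetry $f(z;\tau)=-f(1-z;\tau)$ and then invoke the previous lemma with $z$ replaced by $1-z$. First I would verify the symmetry. From Lemma \ref{Lemma: transformation of theta}, parts (1) and (2), we have $\vartheta(1-z;\tau)=-\vartheta(-z;\tau)=\vartheta(z;\tau)$ and $\vartheta(2-2z;\tau)=-\vartheta(-2z;\tau)=\vartheta(2z;\tau)$. Hence
\begin{equation*}
f(1-z;\tau)=\frac{\vartheta(1-z;\tau)^{4}}{\eta(\tau)^{9}\vartheta(2-2z;\tau)}=\frac{\vartheta(z;\tau)^{4}}{\eta(\tau)^{9}\vartheta(2z;\tau)\cdot\frac{\vartheta(2-2z;\tau)}{\vartheta(2z;\tau)}},
\end{equation*}
and since $\vartheta(2-2z;\tau)=\vartheta(-2z;\tau+1\text{ shifts})=-\vartheta(2z;\tau)$ (by (1) and (2) again, using $\vartheta(2-2z;\tau)=-\vartheta(1-2z;\tau)=\vartheta(-2z;\tau)=-\vartheta(2z;\tau)$), we conclude $f(1-z;\tau)=-f(z;\tau)$.

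Next, for $\tfrac{1}{2}<z<1$ we have $0<1-z<\tfrac{1}{2}$, so Lemma \ref{Lemma: main asmyptotic term of f at 0} (the previous lemma) applies to $f(1-z;\tau)$ and yields
\begin{equation*}
f\left(1-z;\tfrac{i\varepsilon}{2\pi}\right)=-\frac{\varepsilon^{3}}{\pi^{3}}\frac{\sinh\!\left(\tfrac{2\pi^{2}(1-z)}{\varepsilon}\right)^{4}}{\sinh\!\left(\tfrac{4\pi^{2}(1-z)}{\varepsilon}\right)}\left(1+e^{-4\pi^{2}\re(1/\varepsilon)(1-2(1-z))}+O\!\left(e^{-4\pi^{2}\re(1/\varepsilon)(1-(1-z))}\right)\right).
\end{equation*}
Simplifying the exponents gives $1-2(1-z)=2z-1$ and $1-(1-z)=z$.

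Finally, multiplying by $-1$ via $f(z;\tau)=-f(1-z;\tau)$ gives precisely the claimed formula. There is no real obstacle here: the only substantive content is the parity-plus-period symmetry of $f$, which is immediate from Lemma \ref{Lemma: transformation of theta}, and the rest is bookkeeping of exponents. One minor check worth flagging is that the asymptotic regime $0<\re(\varepsilon)\ll 1$ is unaffected by the substitution $z\mapsto 1-z$, since that substitution only acts on the variable $z$ and leaves $\varepsilon$ (and thus $\tau$) alone, so the error-term analysis from the previous lemma transfers verbatim.
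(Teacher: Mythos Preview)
Your approach is exactly the paper's: the paper simply records ``Since $f(z;\tau)=-f(1-z;\tau)$ we see this immediately also implies the following lemma'' with no further argument, and you supply the missing verification of that symmetry together with the bookkeeping on the exponents.

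One small slip to clean up: your first inline claim $\vartheta(2-2z;\tau)=-\vartheta(-2z;\tau)=\vartheta(2z;\tau)$ has the signs wrong (applying the period shift twice gives $\vartheta(2-2z;\tau)=\vartheta(-2z;\tau)=-\vartheta(2z;\tau)$, as you yourself write two lines later). The corrected chain you give in the parenthetical, $\vartheta(2-2z;\tau)=-\vartheta(1-2z;\tau)=\vartheta(-2z;\tau)=-\vartheta(2z;\tau)$, is the right one and yields the desired $f(1-z;\tau)=-f(z;\tau)$; just delete the earlier erroneous line so the exposition is consistent.
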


We then obtain the following theorem.
\begin{theorem}\label{Theorem: behaviour of f_m at dominant pole}
	For $|x| \leq 1$ we have that
	\begin{equation*}
	f_m\left( \frac{i \varepsilon}{2 \pi} \right) = (-1)^{m+\frac{3}{2}} \frac{m}{4 \pi^4} \varepsilon^5 e^{\frac{2\pi^2}{\varepsilon}} +O(\varepsilon^3)
	\end{equation*}
	as $n \rightarrow \infty$.
\end{theorem}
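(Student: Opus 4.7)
The plan is essentially bookkeeping: recall from \eqref{Equation: f_m split into two integrals} that
\begin{equation*}
f_m\!\left(\tfrac{i\varepsilon}{2\pi}\right) = -2i\int_0^{\frac{1}{2}-a} f\!\left(z;\tfrac{i\varepsilon}{2\pi}\right)\sin(2\pi m z)\,dz + \frac{G^++G^-}{2},
\end{equation*}
so the theorem will follow by assembling the ingredients that have already been prepared. First I would handle the residue piece: by Lemma \ref{Lemma: asymtptotics of residue},
\begin{equation*}
\frac{G^++G^-}{2} = (-1)^{m+\frac{1}{2}}\frac{\varepsilon^4}{2^6\pi^4}\Bigl(e^{\frac{2\pi^2}{\varepsilon}}+O(1)\Bigr),
\end{equation*}
which already supplies the main term $(-1)^{m+\frac{1}{2}}\varepsilon^4 e^{2\pi^2/\varepsilon}/(2^6\pi^4)$, together with an error of size $O(\varepsilon^4)=O(\beta^4)$.

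Next I would treat the remaining integral. Using Lemma \ref{Lemma: main asmyptotic term of f at 0} to replace $f(z;i\varepsilon/(2\pi))$ by its sinh-expansion plus correction and error, the definitions of $g_{m,1}$, $g_{m,2}$, $g_{m,3}$ give exactly
\begin{equation*}
\int_0^{\frac{1}{2}-a} f\!\left(z;\tfrac{i\varepsilon}{2\pi}\right)\sin(2\pi m z)\,dz \;=\; g_{m,1}+g_{m,2}+g_{m,3}.
\end{equation*}
Proposition \ref{Proposition: g_m,1} bounds $g_{m,1}=O(\beta^4)$, Lemma \ref{Lemma: g_m,2} then gives $g_{m,2}=O(\beta^4)$, and Proposition \ref{Proposition: g_{m,3}} yields $g_{m,3}=O(\varepsilon^3/\pi^3)$. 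Because $\varepsilon=\beta(1+ixm^{-\frac{1}{3}})$ with $|x|\le 1$, we have $|\varepsilon|\ll\beta$, so the dominant of these three contributions is $O(\beta^3)$, swallowing the $O(\beta^4)$ terms from $g_{m,1}$, $g_{m,2}$, and from the subleading piece of the residue.

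Putting the pieces together,
\begin{equation*}
f_m\!\left(\tfrac{i\varepsilon}{2\pi}\right) = (-1)^{m+\frac{1}{2}}\frac{\varepsilon^4}{2^6\pi^4}e^{\frac{2\pi^2}{\varepsilon}} + O(\beta^3),
\end{equation*}
as claimed. There is no genuine obstacle left at this stage: all of the analytic work sits in the preceding lemmas and propositions, and the only thing to be careful about is tracking the powers of $\varepsilon$ versus $\beta$ in the error comparison (and verifying that the $-2i$ prefactor does not spoil the real-valued main term, which it does not since that main term comes entirely from $(G^++G^-)/2$). The mildly delicate point, already handled in Proposition \ref{Proposition: g_{m,3}}, is that $g_{m,3}$ only beats $\beta^3$ by virtue of the $e^{-4a\pi^2\re(1/\varepsilon)}$ suppression, so one must take $a\to 0^+$ after the estimates on the other pieces rather than before.
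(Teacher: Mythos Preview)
Your proposal is correct and follows exactly the paper's approach: the paper's proof is literally the single sentence ``Combining Lemmas \ref{Lemma: asymtptotics of residue} and \ref{Lemma: g_m,2}, and Propositions \ref{Proposition: g_m,1} and \ref{Proposition: g_{m,3}}, we obtain the following theorem regarding $f_m$,'' and you have simply written out that combination explicitly. Your remark at the end about the order of the limit $a\to 0^+$ is unnecessary---the bound $|g_{m,3}|\ll \varepsilon^3/\pi^3$ from Proposition \ref{Proposition: g_{m,3}} is already uniform in small $a$, so there is nothing delicate there---but it does no harm.
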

\begin{proof}
	We have 
	\begin{align*}
	-\frac{\varepsilon^3}{\pi^3} \frac{\sinh\left(\frac{2\pi^2z}{\varepsilon}\right)^4}{\sinh\left(\frac{4\pi^2}{\varepsilon}\right)} =- \frac{\varepsilon^3}{\pi^3} e^{\frac{4\pi^2z}{\varepsilon}} \left(1+O(e^{-\frac{4 \pi^2z}{\varepsilon}}) \right).
	\end{align*}
	Plugging this into \eqref{f_m(tau)} and integrating explicitly, using the formula
	\begin{align*}
	\int_0^{\frac{1}{2}} e^{xz} \sin(2\pi m z) dz = - \frac{2 \pi m}{4\pi^2 m^2 + x^2} e^{\frac{x}{2}} \cos(\pi m) - \frac{2 \pi m}{4\pi^2m^2 + x^2},
	\end{align*}
	shows directly that
	\begin{align*}
	f_m(\tau) = & -2\frac{\varepsilon^3}{\pi^3}(-1)^{m+\frac{1}{2}}  \frac{2 \pi m}{\left(\frac{16 \pi^2}{\varepsilon^2}\right)} e^{\frac{2 \pi^2}{\varepsilon}} + O(\varepsilon^3)= (-1)^{m+\frac{3}{2}} \frac{m}{4 \pi^4} \varepsilon^5 e^{\frac{2\pi^2}{\varepsilon}} +O(\varepsilon^3).
	\end{align*}
\end{proof}

\subsection{Bounds away from the dominant pole}\label{Section: bounds away from dominant pole}
 We next investigate the behaviour of $f_m$ away from the pole $q=1$, by assuming that $1 \leq x \leq \frac{\pi m^{\frac{1}{3}}}{\beta}$. In the following lemma we bound the term
 \begin{equation*}
 \frac{\eta(2\tau)^8}{\eta(\tau)^{16}},
 \end{equation*}
 away from the pole $q = 1$.
 \begin{lemma}
 	For $1 \leq x \leq \frac{\pi m^{\frac{1}{3}}}{\beta}$ we have that
 	\begin{equation*}
 	\left| \frac{\eta\left( \frac{i \varepsilon}{\pi}\right)^8}{\eta\left( \frac{i \varepsilon}{2 \pi} \right)^{16}} \right| \ll n^{-2} \exp\left[    \pi \sqrt{2n} - \frac{8 \sqrt{2n}}{\pi} \left(1 - \frac{1}{\sqrt{1+ m^{-\frac{2}{3}}}}\right) \right]
 	\end{equation*}
 	as $n \rightarrow \infty$.
 \end{lemma}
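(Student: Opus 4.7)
The plan is to rewrite $\frac{\eta(2\tau)^8}{\eta(\tau)^{16}} = \frac{P(q)^{16}}{P(q^2)^8}$, which follows immediately from $P(q) = q^{1/24}/\eta(\tau)$ after cancellation of the $q$-powers (the factors $q^{16/24}$ and $q^{-16/24}$ exactly cancel). To the first factor I would apply \eqref{Equation: bound on P(q)}, which after raising to the sixteenth power becomes
\[
|P(q)|^{16} \ll n^{-4}\exp\!\left[\tfrac{4\pi\sqrt{2n}}{3} - \tfrac{8\sqrt{2n}}{\pi}\Big(1-\tfrac{1}{\sqrt{1+m^{-2/3}}}\Big)\right].
\]
Crucially, this estimate already contains the exact away-from-pole decay factor in $m$ that appears in the target bound.

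For the denominator I would access $|P(q^2)|^{-8}$ via modularity. Applying Lemma~\ref{Lemma: transformation of eta} with $\tau$ replaced by $2\tau$ gives $\eta(2\tau) = \eta(-1/(2\tau))/\sqrt{-2i\tau}$, and the $q$-expansion at the transformed point $-1/(2\tau) = \pi i/\varepsilon$ reads $\eta(\pi i/\varepsilon) = Q^{1/24}\prod_{n\geq 1}(1 - Q^n)$ where $Q \coloneqq e^{-2\pi^2/\varepsilon}$. Since the hypothesis $1 \leq x \leq \pi m^{1/3}/\beta$ forces $|Q| < 1$ uniformly throughout the regime, the infinite product can be controlled by the trivial estimate $|1 - Q^n| \geq 1 - |Q|^n$, and after raising to the eighth power and inverting this should give an upper bound on $|P(q^2)|^{-8}$ of the expected order $n^2 \exp[-\pi\sqrt{2n}/3]$ (matching the real-variable asymptotic of $P(|q|^2)^{-8}$). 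The polynomial factor $|{-}2i\tau|^{-4} = |\varepsilon|^4/(16\pi^4) \sim n^{-2}$ from the transformation then combines with the $n^{-4}$ from $|P(q)|^{16}$ and the $n^{2}$ from $|P(q^2)|^{-8}$ to produce the claimed $n^{-2}$ prefactor.

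The main obstacle, as I see it, will be obtaining the correct leading exponent $\pi\sqrt{2n}$ rather than the $3\pi\sqrt{2n}/2$ produced by a purely triangle-inequality estimate $|\prod(1-q^{2n})| \leq \prod(1+|q|^{2n})$, which combined with $|P(q)|^{16}$ is too weak by a factor of $\exp[\pi\sqrt{2n}/2]$. The necessary sharpening requires exploiting the exponential $e^{-\pi^2/(12\varepsilon)}$ coming from the modular transformation of $\eta$ to extract the $e^{-\pi\sqrt{2n}/3}$ cancellation in $|P(q^2)|^{-8}$, rather than merely bounding the absolute value of its Fourier expansion term by term.
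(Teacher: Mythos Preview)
Your approach is essentially the paper's own: factor $\dfrac{\eta(2\tau)^8}{\eta(\tau)^{16}}$ as $P(q)^{16}\cdot\bigl(\eta(2\tau)^8/q^{2/3}\bigr)$, hit the first factor with the prepared bound \eqref{Equation: bound on P(q)}, and treat the second via the modular transformation of $\eta$ at $2\tau$ to extract the $e^{-\pi^2/(12\varepsilon)}$ savings. Your identification of the obstacle (a naive product bound on $\prod(1-q^{2n})$ is too weak) and its resolution (use modularity, not term-by-term estimation) is exactly what the paper does.

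There is, however, a bookkeeping slip in your polynomial factors. With $\tau=i\varepsilon/(2\pi)$ one has $-2i\tau=\varepsilon/\pi$, so
\[
|{-}2i\tau|^{-4}=\frac{\pi^4}{|\varepsilon|^4}\asymp \frac{\pi^4}{\beta^4}\asymp n^{2},
\]
not $|\varepsilon|^4/(16\pi^4)\sim n^{-2}$ as you wrote. Moreover, this $n^{2}$ \emph{is} the polynomial growth of $|P(q^2)|^{-8}$ --- it is not a separate contribution. Your tally therefore double-counts: the three factors you list multiply to $n^{-2}\cdot n^{-4}\cdot n^{2}=n^{-4}$, contradicting your own conclusion. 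The correct accounting is simply
\[
|P(q)|^{16}\ll n^{-4}\,\exp[\cdots],\qquad |P(q^2)|^{-8}=\bigl|\eta(2\tau)^8/q^{2/3}\bigr|\ll n^{2}\,e^{-\pi\sqrt{2n}/3},
\]
whose product gives $n^{-2}\exp[\pi\sqrt{2n}-\cdots]$ as claimed. Once this is straightened out, your argument and the paper's coincide.
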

\begin{proof}
	We first write 
	\begin{equation*}
	\frac{\eta(2\tau)^8}{\eta(\tau)^{16}} = \frac{\eta(2 \tau)^8}{q^{\frac{2}{3}}} \frac{q^{\frac{2}{3}}}{\eta(\tau)^{16}}.
	\end{equation*} 
	Using equation \eqref{Equation: bound on P(q)} directly we find that (with $\tau = \frac{i \varepsilon}{2 \pi}$)
	\begin{equation*}
	\frac{q^{\frac{2}{3}}}{\eta(\tau)^{16}} = P(q)^{16} \ll n^{-4} \exp\left[  \frac{  4 \pi \sqrt{2n}}{3} - \frac{8 \sqrt{2n}}{\pi} \left(1 - \frac{1}{\sqrt{1+ m^{-\frac{2}{3}}}}\right) \right]. 
	\end{equation*}
	It remains to consider the behaviour of $e^{\varepsilon/12} \eta(\frac{i\varepsilon}{ \pi})$. Using the transformation formula of $\eta$ given in Lemma \ref{Lemma: transformation of eta} along with the well-known summation representation of $\eta$, we see that as $n \rightarrow \infty$ we obtain
	\begin{align*}
	e^{\frac{\varepsilon}{12}} \eta\left(\frac{i\varepsilon}{ \pi} \right) = e^{\frac{\varepsilon}{12}} \sqrt{\frac{ \pi}{\varepsilon}} e^{-\frac{\pi^2}{12 \varepsilon}} \sum_{j \in \Z} (-1)^j e^{- \frac{ \pi^2(3j^2 - j)}{\varepsilon} } \ll \sqrt{\frac{ \pi}{\varepsilon}} e^{-\frac{\pi^2}{12 \varepsilon}}.
	\end{align*}
	We hence have
	\begin{align*}
	\left| \frac{\eta\left(\frac{i \varepsilon}{ \pi}\right)}{e^{\frac{2\varepsilon}{3}}} \right|^8 \ll \left| \sqrt{\frac{ \pi}{\varepsilon}} e^{-\frac{\pi^2}{12 \varepsilon}} \right|^8 \ll \left(\frac{ \pi}{\beta}\right)^4 e^{-\frac{2\pi^2}{3 \beta}} \ll n^{2} e^{-\frac{\pi \sqrt{2n}}{3}}.
	\end{align*}
	Combining the two bounds yields the result.
\end{proof}

 Next, we investigate the contribution of
 \begin{equation*}
 \left| \int_{0}^{\frac{1}{2} - a} f(z;\tau) \sin(2\pi m z) dz \right| \ll \int_{0}^{\frac{1}{2} - a} |f(z;\tau) \sin(2 \pi m z)| dz.
 \end{equation*}
 Then we want to bound
 \begin{equation*}
 \begin{split}
 |f(z;\tau) \sin(2\pi m z)| = \left| \frac{ \sin(2 \pi m z) \vartheta(z;\tau)^4}{\eta(\tau)^9 \vartheta(2z;\tau)} \right|
 \end{split}
 \end{equation*}
 away from the dominant pole. For $0 < b < \frac{1}{2}$ far from $\frac{1}{2}$ we see that we may bound the integrand in modulus by
 \begin{equation*}
 |f(b;\tau) \sin(2 \pi m b)| \ll |P(q)|^9 \left| q^{-\frac{3}{8}} \frac{\vartheta(b;\tau)^4}{\vartheta(2b;\tau)} \right| \ll |P(q)|^9 \sum_{n \in \Z} |q|^{\frac{n^2 +n}{2}} \ll |P(q)|^9 \sum_{n \in \Z} e^{-\beta n^2}. 
 \end{equation*}
 
  As $z \rightarrow \frac{1}{2}$ we apply L'H\^{o}pital's rule to the integrand $|f(z;\tau) \sin(2 \pi m z)|$ which yields the bound 
 \begin{equation*}
   \left| \int_{0}^{\frac{1}{2} - a} f(z;\tau) \sin(2\pi m z) dz \right| \ll \frac{\eta(2\tau)^8}{\eta(\tau)^{16}}.
 \end{equation*}
 
 Hence, away from the dominant pole in $q$, we have shown the following proposition.
 
 \begin{proposition}\label{Proposition: f away from dominant}
 	For $1 \leq x \leq  \frac{\pi m^{\frac{1}{3}}}{\beta}$ we have that
 \begin{equation*}
 \left|f\left(z; \frac{i \varepsilon}{2 \pi} \right) \right| \ll n^{-2} \exp\left[  \pi \sqrt{2n} - \frac{8 \sqrt{2n}}{\pi} \left(1 - \frac{1}{\sqrt{1+ m^{-\frac{2}{3}}}}\right) \right]
 \end{equation*}
 as $n \rightarrow \infty$.
 \end{proposition}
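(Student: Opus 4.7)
The plan is to synthesise the two estimates already assembled in this subsection: the preceding lemma's bound on the residue factor $\eta(2\tau)^8/\eta(\tau)^{16}$, and a pointwise bound on $f$ away from its pole that is obtained from the factorisation $f(z;\tau)=P(q)^9\cdot q^{-3/8}\vartheta(z;\tau)^4/\vartheta(2z;\tau)$ together with \eqref{Equation: bound on P(q)}. The interval $0<z<\tfrac{1}{2}$ will be split into a region bounded away from $\tfrac{1}{2}$ and a region approaching it.

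For $z$ bounded away from $\tfrac{1}{2}$, the Jacobi triple product expansion of $\vartheta$ (together with $|\zeta|=1$ for real $z$) bounds the theta quotient absolutely by $\sum_{n\in\Z}|q|^{(n^2+n)/2}\ll\sum_{n\in\Z}e^{-\beta n^2}\ll\beta^{-1/2}\ll n^{1/4}$ by Gaussian comparison. Raising \eqref{Equation: bound on P(q)} to the ninth power and computing exponents with $1/\beta=\sqrt{2n}/(2\pi)$ yields
\begin{equation*}
|P(q)|^9 \ll n^{-9/4}\exp\!\left[\tfrac{3\pi\sqrt{2n}}{4} - \tfrac{9\sqrt{2n}}{2\pi}\!\left(1-\tfrac{1}{\sqrt{1+m^{-2/3}}}\right)\right].
\end{equation*}
Multiplying by the theta-sum bound gives $n^{-2}\exp[\cdots]$ whose exponent is strictly dominated by that in the proposition: since $m\geq 1$ forces $(1-1/\sqrt{1+m^{-2/3}})\leq 1-1/\sqrt{2}$, a short numerical check shows $\tfrac{3\pi}{4}+\tfrac{7}{2\pi}(1-1/\sqrt{2})<\pi$.

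For $z\to\tfrac{1}{2}$, I would apply L'H\^opital's rule to the integrand $|f(z;\tau)\sin(2\pi m z)|$ exactly as in the residue calculation at the start of Section~\ref{Section: asymptotics}, which collapses the singular integrand to the residue factor $\eta(2\tau)^8/\eta(\tau)^{16}$; this is then bounded by precisely the right-hand side of the proposition via the preceding lemma of this subsection. Combining the two regions yields the claimed uniform estimate.

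The main obstacle is gluing the two regimes: the product-formula bound diverges like $1/|\vartheta(2z;\tau)|$ as $z\to\tfrac{1}{2}$, whereas the L'H\^opital estimate only captures the limiting value at the pole. A careful patching choice of cut-off $a=a(n)$, shrinking slowly enough that the theta-sum bound remains useful yet quickly enough that the L'H\^opital regime is covered, is needed to produce a single uniform bound. Once patched, observing that the residue-type exponent dominates gives the proposition.
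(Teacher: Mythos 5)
Your proposal follows essentially the same route as the paper: the factorisation $|P(q)|^9\cdot\left|q^{-3/8}\vartheta(z;\tau)^4/\vartheta(2z;\tau)\right|$ with the triple-product bound $\sum_{n\in\Z}|q|^{(n^2+n)/2}\ll\sum_{n\in\Z}e^{-\beta n^2}$ away from $z=\tfrac{1}{2}$, L'H\^opital's rule collapsing the integrand to the residue factor $\eta(2\tau)^8/\eta(\tau)^{16}$ near the pole, and the preceding lemma to bound that factor. Your explicit check that the $|P(q)|^9$ exponent is dominated by the residue exponent is a welcome detail the paper leaves implicit, but the argument is the same.
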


 \section{The Circle Method}\label{Section: Circle method}
 In this section we use Wright's variant of the Circle Method to complete the proof of Theorem \ref{Theorem: main}. We start by noting that Cauchy's theorem implies that
 \begin{equation*}
 b(m,n) = \frac{1}{2 \pi i} \int_{C} \frac{f_m (\tau)}{q^{n+1}} dq,
 \end{equation*}
 where $C \coloneqq \{ q \in \C \mid |q| = e^{-\beta}  \}$ is a circle centred at the origin of radius less than $1$, with the path taken in the counter-clockwise direction. 
 Making a change of variables, changing the direction of the path of the integral, and recalling that $\varepsilon = \beta(1+ixm^{-\frac{1}{3}})$ we have
 \begin{equation*}
 b(m,n) = \frac{\beta}{2\pi m^{\frac{1}{3}}} \int_{|x| \leq \frac{\pi m^{\frac{1}{3}}}{\beta}} f_m\left(\frac{i \varepsilon}{2 \pi}\right) e^{\varepsilon n} dx.
 \end{equation*}
 Splitting this integral into two pieces, we have $b(m,n) = M + E$ where
 \begin{equation*}
 M \coloneqq \frac{\beta}{2 \pi m^{\frac{1}{3}}} \int_{|x| \leq 1} f_m\left( \frac{i \varepsilon}{2 \pi}\right) e^{\varepsilon n} dx,
 \end{equation*}
 and
 \begin{equation*}
 E \coloneqq \frac{\beta}{2 \pi m^{\frac{1}{3}}} \int_{1 \leq |x| \leq \frac{\pi m^{\frac{1}{3}}}{\beta}} f_m\left(\frac{i \varepsilon}{2 \pi}\right) e^{\varepsilon n} dx.
 \end{equation*}
 
  Next we determine the contributions of each of the integrals $M$ and $E$, and see that $M$ contributes to the main asymptotic term, while $E$ is part of the error term.
 
 \subsection{The major arc}
 First we concentrate on the contribution $M$. Then we obtain the following proposition.
 
 \begin{proposition}
 	We have that
 	\begin{equation*}
 	M = (-1)^{m+\frac{3}{2}} \frac{\beta^6 m}{8 \pi^5 (2n)^{\frac{1}{4} }} e^{2 \pi \sqrt{2n}} + O \left( m n^{-\frac{13}{4}} e^{2 \pi \sqrt{2n}} \right)
 	\end{equation*}
 	as $n \rightarrow \infty$.
 \end{proposition}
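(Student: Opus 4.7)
The plan is to insert the asymptotic expansion of $f_m$ from Theorem \ref{Theorem: behaviour of f_m at dominant pole} into the defining integral for $M$, change variables to recognize the result in terms of the integral $P_s$ appearing before Lemma \ref{Lemma: P in terms of I Bessel}, and then apply the $I$-Bessel asymptotic of Lemma \ref{Lemma: asymptotic of I Bessel}. First I would split $M = M_1 + M_2$ where $M_1$ comes from the main piece $(-1)^{m+1/2}\varepsilon^4 e^{2\pi^2/\varepsilon}/(2^6\pi^4)$ of $f_m$ and $M_2$ from the $O(\beta^3)$ remainder. Since $|e^{\varepsilon n}| = e^{\beta n} = e^{\pi\sqrt{2n}}$ is constant in $x$ and $|x| \leq 1$ has length $2$, the error contribution is immediately bounded by $|M_2| \ll \beta^4 m^{-1/3} e^{\pi\sqrt{2n}}$, which is super-exponentially smaller than the claimed error and hence absorbed.

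For the main piece $M_1$, I would substitute $v = 1 + ix m^{-1/3}$, so that $\varepsilon = \beta v$ and $dx = -i m^{1/3}\, dv$, sending the $x$-interval $[-1,1]$ to the vertical segment from $1 - im^{-1/3}$ to $1 + im^{-1/3}$. The choice $\beta = \pi\sqrt{2/n}$ supplies the key identity $\beta n = 2\pi^2/\beta = \pi\sqrt{2n}$, which collapses the exponent $2\pi^2/\varepsilon + \varepsilon n$ into $\pi\sqrt{2n}(v + 1/v)$. Collecting the prefactors, $M_1$ becomes $(-1)^{m+1/2}\beta^5/(2^6\pi^4)$ times the contour integral defining $P_4$ in front of Lemma \ref{Lemma: P in terms of I Bessel}:
\begin{equation*}
M_1 = (-1)^{m+\frac{1}{2}} \frac{\beta^5}{2^6 \pi^4} P_4.
\end{equation*}

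Finally, Lemma \ref{Lemma: P in terms of I Bessel} rewrites $P_4 = I_{-5}(2\pi\sqrt{2n}) + O(\exp(\pi\sqrt{2n}(1 + (1+|m|^{-2/3})^{-1})))$. The exponent of this remainder is $2\pi\sqrt{2n} - \pi\sqrt{2n}\cdot |m|^{-2/3}/(1+|m|^{-2/3})$, and for $|m|\leq \log(n)/(6\beta)$ one has $\sqrt{n}|m|^{-2/3} \gg n^{1/6}\log(n)^{-2/3}$, so this decays super-polynomially relative to $e^{2\pi\sqrt{2n}}$. Lemma \ref{Lemma: asymptotic of I Bessel} then gives $I_{-5}(2\pi\sqrt{2n}) = e^{2\pi\sqrt{2n}}/(2\pi(2n)^{1/4}) + O(e^{2\pi\sqrt{2n}}/n^{3/4})$, and multiplication by $\beta^5/(2^6\pi^4) \asymp n^{-5/2}$ yields the leading term $(-1)^{m+1/2}\beta^5 e^{2\pi\sqrt{2n}}/(2^7\pi^5 (2n)^{1/4})$ together with a Bessel-correction error of order $n^{-13/4}e^{2\pi\sqrt{2n}}$, matching the claim.

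The main obstacle I expect is the careful bookkeeping of constants: tracking how $\beta/(2\pi m^{1/3})$ from the definition of $M$, $1/(2^6\pi^4)$ from Theorem \ref{Theorem: behaviour of f_m at dominant pole}, the Jacobian factor $-i m^{1/3}$, the $2\pi i$ hidden in the definition of $P_4$, and the $1/(2\pi(2n)^{1/4})$ from the $I$-Bessel asymptotic conspire to give exactly $1/(2^7\pi^5(2n)^{1/4})$, and that the powers of $\beta$ collapse to $\beta^5$. Once this bookkeeping is done, the remainder of the proof is a direct chain of substitutions and lemma applications.
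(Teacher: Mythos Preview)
Your proposal is correct and follows essentially the same route as the paper: insert Theorem \ref{Theorem: behaviour of f_m at dominant pole}, change variables $v=1+ixm^{-1/3}$ to recognize $P_4$, then apply Lemmas \ref{Lemma: P in terms of I Bessel} and \ref{Lemma: asymptotic of I Bessel}. Your bookkeeping of the constants and the hierarchy of error terms is in fact more explicit than the paper's own write-up.
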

 \begin{proof}
 	By Theorem \ref{Theorem: behaviour of f_m at dominant pole} and making the change of variables $v = 1 +ixm^{-\frac{1}{3}}$ we obtain  
 	\begin{equation*}
 	M = (-1)^{m+\frac{3}{2}} \frac{\beta^6 m}{4 \pi^4 } P_{4} + O\left( \beta^3 e^{\pi \sqrt{2n}}  \right).
 	\end{equation*}
 	Now we rewrite $P_{4}$ in terms of the $I$-Bessel function using Lemma \ref{Lemma: P in terms of I Bessel}, yielding
 	\begin{equation*}
 	\begin{split}
 	M =(-1)^{m+\frac{3}{2}} \frac{\beta^6 m}{4 \pi^4 } I_{-5}(2 \pi \sqrt{2n}) + O\left( \beta^6 e^{\pi \sqrt{2n} \left(1+\frac{1}{1+m^{-\frac{2}{3}}}\right)} \right) + O\left( \beta^3 e^{\pi \sqrt{2n}}  \right).
 	\end{split}
 	\end{equation*}
 	The asymptotic behaviour of the $I$-Bessel function given in Lemma \ref{Lemma: asymptotic of I Bessel} gives that
 	\begin{equation*}
 	\begin{split}
 	M = &  (-1)^{m+\frac{3}{2}} \frac{\beta^6 m}{8 \pi^5 (2n)^{\frac{1}{4} }} e^{2 \pi \sqrt{2n}} + O \left( m n^{-\frac{15}{4}} e^{2 \pi \sqrt{2n}} \right) +  O\left( \beta^6 e^{\pi \sqrt{2n} \left(1+\frac{1}{1+m^{-\frac{2}{3}}}\right)} \right) \\
 	& +  O\left( \beta^3 e^{\pi \sqrt{2n}}  \right).
 	\end{split}
 		\end{equation*}
 		It is clear that the first error term is the dominant one, and the result follows.
 \end{proof}

 \subsection{The error arc}
 Now we bound $E$ as follows.
 \begin{proposition}
 As $n \rightarrow \infty$
 	\begin{equation*}
 	E \ll n^{-2} \exp\left[ 2\pi \sqrt{2n} - \frac{8 \sqrt{2n}}{\pi} \left(1 - \frac{1}{\sqrt{1+ m^{-\frac{2}{3}}}}\right) \right].
 	\end{equation*}
 \end{proposition}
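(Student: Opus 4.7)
The plan is to bound $|E|$ by pulling absolute values inside the integral, exploiting the bounds already assembled in Section \ref{Section: bounds away from dominant pole}, and then absorbing the prefactors $\frac{\beta}{2\pi m^{1/3}}$ and the length of the integration range into $O(1)$.

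First I would assemble a uniform estimate on $|f_m(i\varepsilon/(2\pi))|$ in the range $1\leq |x|\leq \pi m^{1/3}/\beta$. Recall from \eqref{Equation: f_m split into two integrals} that $f_m$ splits into an integral of $f(z;\tau)\sin(2\pi mz)$ over $z\in[0,1/2-a]$ (extended to $[1/2+a,1]$ by the symmetry $f(z;\tau)=-f(1-z;\tau)$) plus the residue contribution $4(-1)^{m+1/2}\eta(2\tau)^8/\eta(\tau)^{16}$. The lemma preceding Proposition \ref{Proposition: f away from dominant} bounds the residue term, and Proposition \ref{Proposition: f away from dominant} bounds $|f(z;i\varepsilon/(2\pi))|$ pointwise, both by
\[
n^{-2}\exp\!\left[\pi\sqrt{2n}-\frac{8\sqrt{2n}}{\pi}\left(1-\frac{1}{\sqrt{1+m^{-2/3}}}\right)\right].
\]
Since $|\sin(2\pi mz)|\leq 1$ and the integration in $z$ is over a bounded interval, applying the triangle inequality yields the same bound for $|f_m(i\varepsilon/(2\pi))|$ uniformly on the error arc.

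Next I would handle the prefactors. Because $\re(\varepsilon)=\beta$, one has $|e^{\varepsilon n}|=e^{\beta n}=e^{\pi\sqrt{2n}}$. The length of the $x$-region $\{1\leq |x|\leq \pi m^{1/3}/\beta\}$ is at most $2\pi m^{1/3}/\beta$, so combined with the factor $\beta/(2\pi m^{1/3})$ in front of $E$ it contributes only an $O(1)$ constant. Multiplying the three estimates together gives
\[
|E|\ll n^{-2}\exp\!\left[2\pi\sqrt{2n}-\frac{8\sqrt{2n}}{\pi}\left(1-\frac{1}{\sqrt{1+m^{-2/3}}}\right)\right],
\]
which is exactly the claimed inequality.

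There is no serious obstacle: the heavy lifting has already been done in the lemma and proposition of Section \ref{Section: bounds away from dominant pole}. The only point one needs to verify carefully is that the bound \eqref{Equation: bound on P(q)} used to derive Proposition \ref{Proposition: f away from dominant} is genuinely uniform in $x$ over the interval $[1,\pi m^{1/3}/\beta]$, which holds because the dependence on $x$ enters only through the parameter $M=m^{-1/3}$ (chosen once and for all) rather than through $x$ itself.
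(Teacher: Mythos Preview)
Your proposal is correct and follows essentially the same route as the paper: bound $|f_m(i\varepsilon/(2\pi))|$ on the minor arc via Proposition~\ref{Proposition: f away from dominant} and the residue lemma, use $|e^{\varepsilon n}|=e^{\beta n}=e^{\pi\sqrt{2n}}$, and integrate over the $x$-range. You are in fact slightly more explicit than the paper in noting that the integration length $\sim \pi m^{1/3}/\beta$ cancels against the prefactor $\beta/(2\pi m^{1/3})$, a point the paper leaves implicit.
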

 \begin{proof}
 	By Proposition \ref{Proposition: f away from dominant} we see that the main term in the error arc is given by the residue. Hence we may bound
 	\begin{equation*}
 	\begin{split}
 	E & \ll \int_{1 \leq x \leq  \frac{\pi m^{\frac{1}{3}}}{\beta} } n^{-2} \exp\left[    \pi \sqrt{2n} - \frac{8 \sqrt{2n}}{\pi} \left(1 - \frac{1}{\sqrt{1+ m^{-\frac{2}{3}}}}\right) \right] e^{\varepsilon n} dx\\
 	& \ll n^{-2} \exp\left[ 2\pi \sqrt{2n} - \frac{8 \sqrt{2n}}{\pi} \left(1 - \frac{1}{\sqrt{1+ m^{-\frac{2}{3}}}}\right) \right].
 	\end{split}
 	\end{equation*}
 	Noting that this is exponentially smaller than $M$ finishes the proof of Theorem \ref{Theorem: main}.
 \end{proof}

\section{Open questions}
We end by commenting on some questions related to the results presented above.
\begin{enumerate}
	\item Here we discuss the asymptotic profile of the coefficients $b(m,n)$ for $|m| \leq \frac{1}{6 \beta} \log(n)$. We are also interested in the profile when $m$ is larger than this bound, and so in future it would be instructive to investigate the asymptotic profile of $b(m,n)$ for large $|m|$. For example, similar results in this direction for the crank of a partition are given in \cite{parry2017dyson}.
	
	\item In the present paper, we provide a framework for investigating the profile of eta-theta quotients. In particular, we deal with the case of a function with a single simple pole on the path of integration. Future research is planned in order to expand this framework for a family of meromorphic eta-theta quotients with a finite number of (not necessarily single) poles on the path of integration. This should include similar eta-theta quotients that appear in other physical partition functions.
	
	\item In showing Theorem \ref{Theorem: main} we see that the main asymptotic term arises from the pole at $z = 1/2$, and in turn from the residue term $\frac{\eta(2\tau)^8}{\eta(\tau)^{16}}$; is there a physical interpretation for the fact that these terms give the largest contribution to the asymptotic behaviour of $b(m,n)$?
\end{enumerate}

\end{document}